\theoremstyle{plain}
\newtheorem{thm}{Theorem}[section]
\newtheorem{lem}[thm]{Lemma}
\newtheorem{cor}[thm]{Corollary}
\newtheorem{defn}[thm]{Definition}
\newcommand{\ch }{\mathop{\rm char}\nolimits}
\newcommand{\id }{{\rm id}}
\newcommand{\nd }{\noindent}
\newcommand{\degw }{\mathop{\rm deg}_{\w }\nolimits}
\newcommand{\Aut }{\mathop{\rm Aut}\nolimits}
\newcommand{\Aff }{\mathop{\rm Aff}\nolimits}
\newcommand{\T }{\mathop{\rm T}\nolimits}
\newcommand{\E }{\mathop{\rm E}\nolimits}
\newcommand{\GA }{\mathop{\rm GA}\nolimits}
\newcommand{\GL }{\mathop{\it GL}\nolimits}
\newcommand{\ngg }{\mathop{\rm NG}\nolimits}
\newcommand{\zs}{\{ 0\} }
\newcommand{\sm}{\setminus}
\newcommand{\F}{{\bf F}}
\newcommand{\Z}{{\bf Z}}
\newcommand{\Q}{{\bf Q}}
\newcommand{\x}{{\bf x}}
\newcommand{\w}{{\bf w}}
\newcommand{\Rx}{R[{\bf x}]}
\newcommand{\Rxh}{R[\hat{\bf x}]}
\newcommand{\ep}{\epsilon}
\begin{document}

\title
{Stably co-tame polynomial automorphisms 
over commutative rings}

\author{Shigeru Kuroda\thanks{
Partly supported by JSPS KAKENHI 
Grant Number 15K04826}}

\date{}

\maketitle

\begin{abstract}
We say that a polynomial automorphism $\phi $ 
in $n$ variables is {\it stably co-tame} 
if the tame subgroup in $n$ variables 
is contained in the subgroup generated by $\phi $ 
and affine automorphisms in $n+1$ variables. 
In this paper, 
we give conditions for stably co-tameness 
of polynomial automorphisms. 
\end{abstract}

\section{Introduction}
\setcounter{equation}{0}

Let $R$ be a commutative ring of characteristic $p\ne 1$, 
$\Rx :=R[x_1,\ldots ,x_n]$ 
the polynomial ring in $n$ variables over $R$, 
and $\GA _n(R):=\Aut _R\Rx $ 
the automorphism group of the $R$-algebra $\Rx $. 
We identify each $\phi \in \GA _n(R)$ 
with the $n$-tuple 
$(\phi (x_1),\ldots ,\phi (x_n))$ of elements of $\Rx $. 
The composition is defined by 
\begin{equation}\label{eq:composition}
\phi \circ \psi 
=(g_1(f_1,\ldots ,f_n),\ldots ,g_n(f_1,\ldots ,f_n))
\end{equation}
for $\phi =(f_1,\ldots ,f_n),\psi =(g_1,\ldots ,g_n)\in \Rx ^n$. 
For each $r\ge 1$, 
we regard $\GA _n(R)$ as a subgroup of $\GA _{n+r}(R)$ 
by identifying each $\phi \in \GA _n(R)$ 
with the unique extension $\tilde{\phi }\in \GA _{n+r}(R)$ 
of $\phi $ defined by $\tilde{\phi }(x_{n+i})=x_{n+i}$ 
for $i=1,\ldots ,r$.

We say that $\phi \in \GA _n(R)$ is {\it affine} 
if $\phi =(x_1,\ldots ,x_n)A+b$ 
for some $A\in \GL _n(R)$ and $b\in R^n$, 
and set $\Aff _n(R):=\{ \phi \in \GA _n(R)\mid 
\phi \ {\rm is\ affine}\} $. 
We define 
$$
\ep (f):=(x_1+f,x_2,\ldots ,x_n)
\in \GA _n(R)\ \ {\rm for\ each}\ \ 
f\in \Rxh :=R[x_2,\ldots ,x_n], 
$$
and set $\E _n(R):=\{ \ep (f)\mid f\in \Rxh \} $. 
We call $\T _n(R):=\langle \Aff _n(R), \E _n(R)\rangle $ 
the {\it tame subgroup}, 
and elements of $\T _n(R)$ are said to be {\it tame}. 
Here, for subsets $S_1,\ldots ,S_r$ 
and elements $g_1,\ldots ,g_s$ of a group $G$, 
we denote by $\langle S_1,\ldots ,S_r,g_1,\ldots ,g_s\rangle $ 
the subgroup of $G$ generated by 
$\bigcup _{i=1}^rS_i\cup \{ g_1,\ldots ,g_s\} $.

If $n\ge 3$ and $R$ contains $\Q $, 
then $\T _n(R)=\langle \Aff _n(R),\sigma \rangle $ 
holds for $\sigma =\ep (x_2^2)$ by Derksen 
(cf.~\cite[Thm.\ 5.2.1]{Essen}). 
We remark that Derksen's theorem 
requires that 
$R$ is generated by $R^*$ as a $\Q $-vector space, 
but this assumption is in fact unnecessary 
(cf.~\S \ref{sect:main1}). 
When $R$ is a field and $p=0$, 
Bodnarchuk~\cite{B2} proved 
a similar result 
for more general $\sigma $'s. 
The situation is  different if $p$ is a prime. 
Maubach-Willems~\cite{MW} 
showed that 
$\T _3(\F_2)\ne \langle \Aff _3(\F _2),\ep (x_2^2) \rangle $, 
and conjectured that, 
if $n\ge 3$ and $R$ is a finite field, 
then no finite subset $E$ of $\T _n(R)$ 
satisfies $\T _n(R)=\langle \Aff _n(R),E\rangle $.

Edo~\cite{Edo cotame} 
found a class of 
$\phi \in \GA _n(R)$ 
for which $\langle \Aff _n(R),\phi \rangle $ 
contains $\T _n(R)$. 
Such $\phi $ is said to be {\it co-tame}. 
If $R$ is a field, 
no element of $\GA _2(R)$ is co-tame thanks to 
Jung~\cite{Jung} and van der Kulk~\cite{Kulk}. 
For $n\ge 3$, 
it is easy to find elements of $\GA _n(R)\sm \Aff _n(R)$ 
which are not co-tame 
if $R$ is not a field 
(cf.~(\ref{eq:reduction})), 
or if $p$ is a prime (cf.~\cite[\S 1 (d)]{EK}). 
In the case where $n\ge 3$, $R$ is a field and $p=0$, 
the first example of such an automorphism was 
found by Edo-Lewis~\cite{EL} for $n=3$.

Recall that $\phi \in \GA _n(R)$ 
is said to be {\it stably tame} 
if $\phi $ belongs to $\T _{n+1}(R)$. 
It is known that some non-tame automorphisms 
are stably tame 
(cf.~\cite{BEW}, \cite{Nagata}, \cite{SU}, \cite{Smith}). 
The following is an analogue 
to the stably tame automorphisms.

\begin{defn}\rm 
We say that 
$\phi \in \GA _n(R)$ is {\it stably co-tame} 
if $\langle \Aff _{n+1}(R),\phi \rangle $ 
contains $\T _n(R)$, 
or equivalently 
$\langle \Aff _{n+1}(R),\phi \rangle $ 
contains $\E _n(R)$. 
\end{defn}

Clearly, 
co-tame automorphisms are stably co-tame. 
When $R$ is a field, 
there exist elements of $\T _3(R)$ 
which are not co-tame but stably co-tame 
in both cases $p=0$ and $p>0$ 
(cf.~\S \ref{sect:rmk}.1). 
The purpose of this paper is to study when 
elements of $\GA _n(R)$ are 
stably co-tame or not. 
If $R$ contains an infinite field, 
we have a necessary 
and sufficient condition for stably co-tameness 
(Corollary~\ref{cor:main}).

This paper is organized as follows. 
The main results are stated in Section~\ref{sect:main}, 
and three key results are proved in 
Sections~\ref{sect:main1}, \ref{sect:IP} 
and \ref{sect:main2}. 
In Section~\ref{sect:rmk}, 
we study stably co-tameness of the example of Edo-Lewis. 
We also discuss a technique which is useful 
when $R$ does not contain an infinite field.

\section{Main results}\label{sect:main}
\setcounter{equation}{0}

Since $\T _1(R)=\Aff _1(R)$, 
we always assume that $n\ge 2$ unless otherwise stated. 
Take any $\phi \in \GA _n(R)$. 
We define $M_{\phi }$ to be the $R$-submodule of $\Rx $ 
generated by 1, $x_i$ and $\eta (\phi (x_i))$ 
for $i=1,\ldots ,n$ and $\eta \in \Aff _n(R)$. 
By definition, 
we have 
\begin{equation}\label{eq:M phi rmk}
\eta (M_{\phi })\subset M_{\phi }\quad
{\rm for\ each}\quad 
\eta\in \Aff _n(R). 
\end{equation}

The following theorem 
holds for any commutative ring $R$ 
of characteristic $p\ne 1$.

\begin{thm}\label{thm:main1}
$\phi \in \GA _n(R)$ is stably co-tame 
in the following four cases:

\nd {\rm (a)} 
$M_{\phi }$ contains $x_ix_j$ for some $1\le i<j\le n$.

\nd {\rm (b)}
$2$ is a unit of $R$, 
and $M_{\phi }$ contains $x_i^2$ 
for some $1\le i\le n$.

\nd {\rm (c)}
$n=p=2$ and $M_{\phi }$ contains $x_1^2x_2$ or $x_1x_2^2$.

\nd {\rm (d)}
$n=p=2$, 
$M_{\phi }$ contains $x_1^3$ or $x_2^3$, 
and there exists $\xi \in R^*$ 
satisfying $\xi +1\in R^*$. 
\end{thm}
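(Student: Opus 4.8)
The plan is to reduce all four cases to a single underlying mechanism: showing that $\langle \Aff_{n+1}(R),\phi\rangle$ contains a "test" elementary automorphism of the form $\ep(h)$ in $n+1$ variables whose coordinate is one of the degree-two or degree-three monomials listed, and then to bootstrap from that one elementary automorphism to all of $\E_n(R)$ using affine conjugation in $n+1$ variables. The first observation I would record is that conjugating $\phi$ by affine automorphisms and composing with members of $\Aff_{n+1}(R)$ lets us manipulate $\phi$ modulo $M_\phi$: concretely, if $\eta\in\Aff_n(R)$ then $\eta\circ\phi\circ\eta'$ has first coordinate lying in $M_\phi$ up to affine terms, so the hypothesis "$M_\phi$ contains $m$" for a monomial $m$ should be translatable into the statement that $\langle\Aff_{n+1}(R),\phi\rangle$ contains an automorphism agreeing with $\ep(m)$ modulo lower-order (affine) corrections, which can then be cleaned up by another affine factor.

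Next I would treat case (a) as the base case and the engine for the rest. Given $x_ix_j\in M_\phi$ with $i<j$, after an affine change of variables we may assume $i=1$, $j=2$, so $\langle\Aff_{n+1}(R),\phi\rangle$ contains (a representative of) $\ep(x_2x_3)$ in $n+1$ variables — here I am using the extra variable $x_{n+1}$ and an affine substitution to replace $x_2x_2$-type products by $x_2x_{n+1}$. The key computation is then a commutator identity: the commutator of $\ep(x_2 x_{n+1})$ with a suitable affine map (one that sends $x_{n+1}\mapsto x_{n+1}+x_3$, say) produces $\ep(x_2x_3)$, hence $\ep(cx_2x_3)$ for all $c\in R$ by scaling, and more generally by varying the affine maps one gets $\ep(\ell_1\ell_2)$ for arbitrary linear forms $\ell_1,\ell_2$ in $x_2,\dots,x_n,x_{n+1}$; polarization/bilinearity then yields $\ep(f)$ for every $f\in R[x_2,\dots,x_n]$ homogeneous of degree $2$, and composing with affine maps adds the lower-degree part, giving all of $\E_n(R)$. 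This is essentially Derksen's argument adapted to the stable setting, and it shows case (a).

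Cases (b), (c), (d) should then be handled by showing each hypothesis forces us back into case (a) after one multiplication trick in the extra variable. For (b): with $2\in R^*$ and $x_i^2\in M_\phi$, the identity $2x_ix_{n+1}=(x_i+x_{n+1})^2-x_i^2-x_{n+1}^2$ — applied after substituting $x_i\mapsto x_i+x_{n+1}$ and recombining affinely — produces $x_ix_{n+1}\in M_{\phi'}$ for the extended $\phi$, putting us in case (a). For (c) with $n=p=2$ and, say, $x_1^2x_2\in M_\phi$: here $2=0$, so the squaring trick fails, but I would use the extra variable to get (a representative of) $\ep(x_1^2x_{3})$ or $\ep(x_1x_3x_2)$-type maps and exploit that in characteristic $2$ the Frobenius-like identity $(x_1+x_3)^2x_2 = x_1^2x_2 + x_3^2x_2$ is additive; combining with an affine map sending $x_3\mapsto x_3+1$ peels off an $x_1x_2 x_3$ cross term is not available directly, so instead one differentiates via commutators with $\ep(x_3)\mapsto$-type shifts to drop the degree, landing on $\ep(x_1x_2)$ and hence case (a). For (d) with $x_1^3\in M_\phi$ and a unit $\xi$ with $\xi+1\in R^*$: substituting $x_1\mapsto x_1+\xi x_{n+1}$ and $x_1\mapsto x_1+x_{n+1}$ and taking the difference of the resulting cubes produces, thanks to $\xi,\xi+1,\xi^2+\xi\in R^*$ (the last because $\xi^2+\xi=\xi(\xi+1)$), a nonzero unit multiple of $x_1^2x_{n+1}$ modulo cubes in a single variable, which is the case-(c) monomial in the enlarged variable set, so (d) follows from (c).

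The main obstacle I expect is bookkeeping the "modulo $M_\phi$ and modulo affine" reductions rigorously: $M_\phi$ is only an $R$-module, not an algebra, so one cannot freely multiply elements of $M_\phi$, and every algebraic manipulation (squaring, cubing, multiplying by a new variable) must be realized as an actual composition of $\phi$ with affine automorphisms in $n+1$ variables rather than as an abstract identity inside $M_\phi$. Making precise the step "$M_\phi$ contains $m$ $\Longrightarrow$ $\langle\Aff_{n+1}(R),\phi\rangle$ contains $\ep(m')$ for the appropriate $m'$" — i.e., converting membership in the linear span into an honest group element — is where the real care is needed, and I would isolate it as a preliminary lemma used uniformly across all four cases.
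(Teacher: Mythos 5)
Your overall architecture --- a preliminary lemma converting ``$m\in M_{\phi }$'' into membership of an elementary automorphism $\hat{\ep }(m)=(x_1,\ldots ,x_n,x_{n+1}+m)$ in $\langle \Aff _{n+1}(R),\phi \rangle $, followed by case (a) as the engine and reductions (b)$\Rightarrow $(a), (d)$\Rightarrow $(c) via unit-scaling identities --- is exactly the paper's plan, and the preliminary lemma is carried out there via the conjugation $\hat{\ep }(f_i)=\phi \circ \hat{\ep }(x_i)\circ \phi ^{-1}$ and the commutator identity $\ep (fg)=\hat{\ep }(f)\circ \ep (gx_{n+1})\circ \hat{\ep }(f)^{-1}\circ \ep (gx_{n+1})^{-1}$. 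But your execution has two genuine gaps. First, in case (a) you only produce $\ep (f)$ for $f$ of degree at most $2$: commutators of $\ep (x_2x_{n+1})$ with \emph{affine} maps, plus polarization, never leave degree $2$, yet $\E _n(R)$ contains $\ep (x_2^{100})$. The missing step is the degree-raising recursion: from $\ep (x_ix_{n+1})\in \Gamma $ and $\ep (f)\in \Gamma $ the commutator above (with the \emph{non-affine} map $\hat{\ep }(f)$, not an affine one) yields $\ep (x_if)\in \Gamma $, and induction on $\deg f$ then gives all of $\E _n(R)$. ``Composing with affine maps adds the lower-degree part'' does not substitute for this; indeed over $\F _2$ the degree-$2$ elementaries together with affines do \emph{not} generate $\E _3(\F_2)$ (Maubach--Willems), so some such recursion is indispensable.

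Second, your case (c) is broken: the reduction to case (a) is not available. In characteristic $2$ the square of a linear form has no cross terms, so every affine image of $x_1^2x_2$ or $x_1x_2^2$ is an $R$-combination of $x_1^3,x_1^2x_2,x_1x_2^2,x_2^3,x_1^2,x_2^2$ and linear terms --- the monomial $x_1x_2$ never appears, and your own sketch concedes the direct route fails before asserting without justification that commutators ``land on $\ep (x_1x_2)$.'' The paper instead treats (c) by a separate branch of its key lemma: from $\hat{\ep }(x_1x_2^2)\in \Gamma $ one conjugates by the transposition $(1,3)$ to get $\ep (x_2^2x_3)\in \Gamma $, and the commutator recursion then shows $\ep (ax_2^l)\in \Gamma $ implies $\ep (ax_2^{l+2})\in \Gamma $; starting from the affine $\ep (a)$ and $\ep (ax_2)$ this generates every $\ep (ax_2^l)$, which suffices because for $n=2$ one has $\Rxh =R[x_2]$. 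You need some such argument that never passes through $x_1x_2$. A smaller, repairable slip: your reductions for (b) and (d) adjoin $x_{n+1}$ and work in $M_{\tilde{\phi }}$ for the extension $\tilde{\phi }\in \GA _{n+1}(R)$, which (via the preliminary lemma applied in $n+2$ variables) would only prove $\T _n(R)\subset \langle \Aff _{n+2}(R),\phi \rangle $; moreover in (d) the target ``case (c) in the enlarged variable set'' is vacuous since (c) requires $n=2$. Both are fixed by staying inside $x_1,\ldots ,x_n$ (possible as $n\ge 2$), e.g.\ $x_1^2-x_2^2=(x_1+x_2)(x_1-x_2)$ with the affine change $(x_1+x_2,x_1-x_2,x_3,\ldots ,x_n)$ for (b), and $x_1^3+x_2^3+(x_1+x_2)^3=x_1^2x_2+x_1x_2^2$ followed by the $\xi $-scaling for (d).
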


Next, 
assume that $p$ is zero or a prime. 
For $a\in R$ and $t_1,\ldots ,t_n\ge 0$, 
we call $ax_1^{t_1}\cdots x_n^{t_n}$ 
a {\it good monomial} in the following five cases:

\smallskip

\nd (I) 
$p=0$ and $t_1+\cdots +t_n\ge 2$.

\nd (II) 
$p\ge 2$ and 
there exist $1\le i<j\le n$ 
such that $t_i,t_j\equiv 1\pmod{p}$.

\nd (III) 
$p\ge 3$ and 
there exists $1\le i\le n$ such that $t_i\not\equiv 0,1\pmod{p}$.

\nd (IV) 
$n=p=2$ and there exist $i,j\in \{ 1,2\} $ such that 
$t_i\equiv 1$ and 
$t_j\equiv 2\pmod{4}$.

\nd (V) 
$n=p=2$ and there exists $1\le i\le 2$ such that 
$t_i\equiv 3\pmod{4}$.

\smallskip

For each $f\in \Rx $, 
let $C_f$ denote the set of the coefficients of good monomials 
appearing in $f$. 
We define $I_{\phi }$ to be the ideal of $R$ 
generated by $\bigcup _{i=1}^nC_{\phi (x_i)}$.

\begin{thm}\label{thm:NG}
Assume that $p$ is zero or a prime. 
If $\phi \in \GA _n(R)$ satisfies $I_{\phi }\ne R$, 
then $\phi $ is not stably co-tame. 
\end{thm}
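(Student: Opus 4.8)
The plan is to argue by contradiction. Since $I_\phi \ne R$, there is a maximal ideal $\mathfrak{m}$ of $R$ with $I_\phi \subset \mathfrak{m}$; let $k := R/\mathfrak{m}$, a field of characteristic $p$ (or a prime dividing $p$ when $p$ is a prime power, but since $p$ is zero or prime this is just $\mathbb{Z}/p$ or characteristic $0$). Reduction modulo $\mathfrak{m}$ induces a group homomorphism $\GA_{n+1}(R) \to \GA_{n+1}(k)$ carrying $\Aff_{n+1}(R)$ into $\Aff_{n+1}(k)$ and carrying $\phi$ to some $\bar\phi \in \GA_n(k)$. The point of the hypothesis $I_\phi \subset \mathfrak{m}$ is that $\bar\phi(x_i) \in k[\x]$ contains \emph{no good monomials} of degree $\ge 2$: every coefficient of a good monomial lies in $\mathfrak{m}$ and hence dies. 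So it suffices to produce a group-theoretic invariant, defined over the field $k$, that is preserved by $\Aff_{n+1}(k)$ and by any $\psi \in \GA_{n+1}(k)$ all of whose components are free of good monomials, but that is violated by some element of $\E_n(k)$ — this will show $\langle \Aff_{n+1}(k), \bar\phi\rangle$ cannot contain $\E_n(k)$, contradicting stable co-tameness of $\phi$.

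The natural invariant is a notion of ``complexity modulo good monomials''. First I would check the key closure fact: if $f_1,\dots,f_{n+1} \in k[x_1,\dots,x_{n+1}]$ each have no good monomials (in the appropriate sense, accounting for all $n+1$ variables), then for any polynomial $g$ with no good monomials, the composite $g(f_1,\dots,f_{n+1})$ again has no good monomials. This is a statement about which monomials $x_1^{t_1}\cdots x_{n+1}^{t_{n+1}}$ are ``good'': one must verify that the exponent patterns excluded in cases (I)--(V) form a set closed under the natural operation ``take a product of several non-good exponent vectors and a non-good outer exponent vector''. Concretely, for $p=0$ the non-good monomials are exactly the affine ones ($\deg \le 1$), and composition of affine-plus-nothing is affine; for $p$ prime one tracks exponents modulo $p$ (and modulo $4$ when $n=p=2$), and the combinatorics of cases (II)--(V) is exactly engineered so that a product of vectors each having at most one coordinate $\equiv 1 \pmod p$ and none $\equiv 2,3,\dots$ again has that shape. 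I expect this monomial bookkeeping to be the technical heart, but it should be the same computation that justified the \emph{sufficiency} direction (Theorem~\ref{thm:main1}), run in reverse.

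Granting the closure fact, the argument finishes cleanly. Let $G$ be the set of $\psi \in \GA_{n+1}(k)$ whose components are all free of good monomials. By the closure fact $G$ is closed under composition; one checks it is closed under inverses too (if $\psi \in G$ then $\psi^{-1}$ has no good monomials — argue by comparing degrees, or by noting that the property of a single automorphism having no good monomials is detected by its linearization together with a filtration argument). Also $\Aff_{n+1}(k) \subset G$ trivially, and $\bar\phi \in G$ by the choice of $\mathfrak{m}$. Hence $\langle \Aff_{n+1}(k), \bar\phi\rangle \subset G$. But $\E_n(k) \not\subset G$: the generator $\ep(x_2^2) = (x_1 + x_2^2, x_2, \dots, x_{n+1})$ contains the monomial $x_2^2$, which is good in every one of the five cases (degree $\ge 2$ when $p=0$; exponent $2 \equiv 2 \pmod p$ giving case (III) when $p \ge 3$; $2 \equiv 2 \pmod 4$ combined with $x_1^1$ after applying a suitable affine map to get case (IV), or directly $x_2^2 x_3^{\,p}$-type monomials — more simply, apply $\ep(x_2^2)$ composed with a coordinate permutation to exhibit a good monomial in the $n=p=2$ case as well). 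Therefore $\langle \Aff_{n+1}(k), \bar\phi\rangle$ does not contain $\E_n(k)$, so it does not contain $\T_n(k)$, contradicting the definition of stable co-tameness applied to $\bar\phi$ — and the reduction map shows $\phi$ itself cannot have been stably co-tame. The one point needing care is that a few good monomials, e.g. in case (IV), involve two distinct variables with specific residues, so to exhibit a \emph{single} good monomial in some element of $\E_n(k)$ one should pick the element $\ep(f)$ cleverly (using $f = x_2^3$ for case (V), $f = x_2^2 x_3$ for (IV), etc.); since $\E_n(k)$ must be contained in the subgroup, it is enough that \emph{one} such element fails to lie in $G$.
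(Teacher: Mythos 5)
Your overall strategy is the paper's: reduce modulo an ideal containing $I_\phi$, show that the automorphisms of the $(n+1)$-variable ring whose components avoid a suitable set of ``forbidden'' monomials form a subgroup containing $\Aff_{n+1}$ and the reduction of $\phi$, and then exhibit an element of $\E_n$ outside that subgroup. The paper implements this via an explicit monomially-generated graded submodule $V$ with $V^{n+1}$ closed under composition, plus a lemma (its Lemma~4.2) showing that for such $V$ the set $V^{n+1}\cap\GA_{n+1}$ is closed under inverses; your parenthetical ``filtration argument'' points at the right idea but this inverse-closure step is a genuine lemma, not a one-line check.

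The concrete gap is the case $n=p=2$. Cases (IV) and (V) of ``good'' are defined only for $n=p=2$, so when you pass to $n+1=3$ variables the literal definition of good monomial degenerates to case (II) (two exponents $\equiv 1 \bmod 2$). With that reading, your set $G\subset\GA_3(k)$ contains \emph{all} of $\E_2(k)$: every $\ep(f)$ with $f\in k[x_2]$ has univariate nonlinear part, hence no monomial with two odd exponents, so no contradiction is reached. Your attempted fixes do not repair this: $x_2^2$ is not a good monomial when $p=2$ (its exponent is $\equiv 0\bmod 2$, and type (IV) needs a companion exponent $\equiv 1\bmod 4$), and ``applying a suitable affine map'' cannot produce a witness, because if $G$ is closed under composition and contains $\Aff_{3}(k)$ then no affine conjugate of an element of $G$ leaves $G$; also $\ep(x_2^2x_3)$ is not an element of $\E_2(k)$. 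What is needed is a strictly finer forbidden set in three variables whose complement is still closed under composition --- the paper takes the monomials outside $W_3=R[\x^2]+\sum_i R[\x^4]x_i$ (so that $x_2^3$, of type (V), is forbidden and $\ep(x_2^{3})\notin W_3^3$) and verifies closure for this module separately. Without introducing and verifying such a refinement, your argument proves nothing in the $n=p=2$ case. (A smaller inaccuracy: the composition-closure computation is not ``the computation behind Theorem~2.1 run in reverse''; it is an independent verification, done in the paper via the modules $V(d,e,N)$.)
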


Throughout this paper, 
let $k$ be a field. 
When $R$ is a commutative $k$-algebra, 
we say that $f\in \Rx $ satisfies the {\it degree condition} if 
\begin{equation}\label{eq:degree condition}
\deg _{x_i}^sf\le \# k-2
\quad {\rm for}\quad 
i=1,\ldots ,n. 
\end{equation}
Here, 
$\deg _{x_i}^sf$ denotes 
the separable degree of $f$ if $p>0$, 
and the standard degree $\deg _{x_i}f$ of $f$ if $p=0$, 
as a polynomial in $x_i$. 
The {\it separable degree} of 
a polynomial $g(x)\in R[x]$ in one variable 
is defined as the degree of $h(x)\in R[x]\sm R[x^p]$ such that 
$h(x^{p^e})=g(x)$ for some $e\ge 0$. 
We say that $\phi \in \GA _n(R)$ 
satisfies the {\it degree condition} 
if $\phi (x_i)$ satisfies the degree condition 
for $i=1,\ldots ,n$.

Now, we define $J_{\phi }$ 
to be the ideal of $R$ generated by the union of $C_f$ 
for $f\in \sum _{i=1}^nR\phi (x_i)+\sum _{i=1}^nRx_i$ 
satisfying the degree condition. 
Since no good monomial is linear, 
$J_{\phi }$ is contained in $I_{\phi }$. 
If $\phi $ 
satisfies the degree condition, 
then $J_{\phi }$ is equal to $I_{\phi }$. 
Hence, 
(ii) of the following theorem is a consequence of (i).

\begin{thm}\label{thm:main2}
Let $k$ be a field, 
$R$ a commutative $k$-algebra, 
and $\phi \in \GA _n(R)$.

\noindent{\rm (i)} 
If $J_{\phi }=R$, 
then $\phi $ is stably co-tame.

\noindent{\rm (ii)} 
If $\phi $ satisfies the degree condition 
and if $I_{\phi }=R$, 
then $\phi $ is stably co-tame. 
\end{thm}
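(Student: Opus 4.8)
The plan is to reduce Theorem~\ref{thm:main2} to Theorem~\ref{thm:main1} by showing that if $J_\phi = R$, then $M_\phi$ contains one of the ``target monomials'' listed in parts (a)--(d) of Theorem~\ref{thm:main1}. Since (ii) follows from (i) as already noted in the excerpt, it suffices to prove (i). The starting observation is that $J_\phi = R$ means $1$ can be written as a finite $R$-linear combination $1 = \sum_\lambda a_\lambda c_\lambda$ where each $c_\lambda \in C_{f_\lambda}$ is the coefficient of a good monomial appearing in some $f_\lambda \in \sum_i R\phi(x_i) + \sum_i Rx_i$ satisfying the degree condition. The key point is that each such $f_\lambda$ already lies in $M_\phi$ up to the affine-invariance property \eqref{eq:M phi rmk}: indeed $\phi(x_i) \in M_\phi$ and $x_i \in M_\phi$, and $M_\phi$ is an $R$-module, so $\sum_i R\phi(x_i) + \sum_i Rx_i \subset M_\phi$.

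First I would make precise the ``monomial extraction'' step: given $f \in M_\phi$ satisfying the degree condition, and given that $f$ contains a good monomial $c\,x_1^{t_1}\cdots x_n^{t_n}$, I want to produce the corresponding target monomial inside $M_\phi$ with a unit coefficient — or more precisely, to show that the $R$-ideal generated by such coefficients $c$ forces membership of a target monomial once that ideal is all of $R$. The mechanism is the classical ``finite-field interpolation / divided-difference'' trick: applying shift automorphisms $\eta \in \Aff_n(R)$ of the form $x_i \mapsto x_i + s$ for $s \in k$ and taking $R$-linear combinations, one can isolate individual graded or monomial components of $f$, provided the relevant one-variable degrees are at most $\#k - 2$ so that there are enough points in $k$ to interpolate. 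This is exactly why the degree condition \eqref{eq:degree condition} (with separable degree in positive characteristic, to handle the Frobenius) appears. The outcome should be: if $f \in M_\phi$ satisfies the degree condition and a good monomial with coefficient $c$ appears in $f$, then $c \cdot (\text{the associated low-degree target monomial}) \in M_\phi$.

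The bulk of the work is the bookkeeping matching each of the five types (I)--(V) of good monomial to a case (a)--(d) of Theorem~\ref{thm:main1}. For type (I), when $p = 0$, any monomial of total degree $\ge 2$ can be degraded — via repeated partial differentiation implemented as divided differences, or by the interpolation above — down to either $x_ix_j$ or $x_i^2$, landing in case (a) or (b). For types (II) and (III) with $p$ a prime, the congruence conditions on the exponents mod $p$ are precisely what survive the Frobenius-twisted interpolation: $t_i \equiv 1$ lets one extract a linear-in-$x_i$ dependence after the appropriate $p$-power reduction, so type (II) yields $x_ix_j \in M_\phi$ (case (a)) and type (III), needing $2$ invertible since $p \ge 3$, yields $x_i^2 \in M_\phi$ (case (b)). Types (IV) and (V) are the genuinely delicate $n = p = 2$ cases: here I would track things mod $4$ rather than mod $2$, using that the relevant one-variable separable degrees are still $\le \#k - 2$, to extract $x_1^2x_2$ or $x_1x_2^2$ (case (c)) from type (IV), and $x_1^3$ or $x_2^3$ (case (d)) from type (V) — but case (d) also requires an element $\xi \in R^*$ with $\xi + 1 \in R^*$, which is automatic here since $R$ is a $k$-algebra and $k$ is a field with $\#k \ge \deg^s + 2 \ge 3$ (in characteristic $2$, $\#k \ge 4$), so such a $\xi$ exists in $k^*$.

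The main obstacle I anticipate is making the interpolation/extraction lemma robust enough in positive characteristic to simultaneously (a) respect the separable-degree version of the degree condition, (b) correctly commute with Frobenius so that ``$t_i \equiv 1 \bmod p$'' really does translate into an extractable linear contribution, and (c) not lose the coefficient — i.e.\ ensuring that the span of extracted coefficients over all admissible $f$ and all admissible affine shifts is literally the ideal $J_\phi$, not something smaller. Once the lemma is in place, the deduction $J_\phi = R \Rightarrow$ some target monomial lies in $M_\phi$ is a matter of writing $1 = \sum a_\lambda c_\lambda$, applying the lemma to each $c_\lambda$, and observing that an $R$-linear combination summing to the unit monomial $1 \cdot (\text{target})$ forces that target monomial into $M_\phi$ — after which Theorem~\ref{thm:main1} finishes the proof. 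Part (ii) then follows immediately, since the degree condition on $\phi$ forces $J_\phi = I_\phi$.
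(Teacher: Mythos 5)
Your overall strategy --- reduce to Theorem~\ref{thm:main1} by first extracting individual monomials from elements of $M_\phi$ via interpolation over $k$ (this is where the degree condition enters) and then degrading each good monomial to a low-degree target by an affine shift --- is exactly the paper's (Lemmas~\ref{lem:vdm2} and~\ref{lem:good eta}), and most of your type-by-type matching is sound. However, the concluding step, ``an $R$-linear combination summing to the unit monomial forces that target monomial into $M_\phi$,'' has a genuine gap: the good monomials witnessing $J_\phi=R$ may be of \emph{different} types, so the extracted targets are \emph{different} monomials ($a_lx_ix_j$, $a_lx_ix_j^2$, $a_lx_i^3$, \dots), and no single one of them need occur with coefficients generating $R$. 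One must first normalize all contributions onto a \emph{common} target before summing. When $p\neq 2$ this is easy (use $a_lx_1(x_2+x_1)-a_lx_1x_2=a_lx_1^2$ to turn every contribution into $a_lx_1^2$, which sums to $x_1^2$, giving case (b)); when $p=2$ and $n\ge 3$ only type (II) can occur, so there is nothing to fix.

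The genuinely missing ingredient is the case $n=p=2$, where types (II), (IV) and (V) may coexist. Your plan sends type (V) to case (d), but case (d) demands $x_i^3$ itself with unit coefficient, which you cannot obtain by summing, since the remaining $a_l$ are attached to type-(IV) or type-(II) targets. The paper instead converts both (IV) and (V) data into $a_lx_1x_2^2$ (via the identities in the proof of Lemma~\ref{lem:key1}; this is where the hypothesis $\xi,\xi+1\in R^*$ is actually consumed), and --- crucially --- invokes Lemma~\ref{lem:nilp} (which rests on Jung--van der Kulk) to show that every type-(II) coefficient is \emph{nilpotent} when $n=p=2$, so that the sum of the type-(IV)/(V) coefficients is forced to be a unit and $x_1x_2^2\in M_\phi$ follows, giving case (c). Without this nilpotency input your argument cannot rule out, say, $1=a_1+a_2$ with $a_1$ the coefficient of a type-(II) monomial and $a_2$ of a type-(IV) monomial, neither a unit. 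A smaller imprecision: the paper's extraction uses multiplicative scalings $x_i\mapsto\xi_ix_i$ (a Vandermonde argument requiring only $\deg_{x_i}^sf+1\le \#k-1$ nonzero scalars, which is exactly why the degree bound is $\#k-2$ and why separable degree is the right notion), with the additive shift $\lambda:x_i\mapsto x_i+1$ applied only afterwards to a single good monomial; your blending of the two into one divided-difference step would need to be reworked to respect that bound.
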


From Theorems~\ref{thm:NG} and \ref{thm:main2} (ii), 
we obtain the following corollary.

\begin{cor}\label{cor:main}
Let $k$ be an infinite field, 
and $R$ a commutative $k$-algebra. 
Then, 
$\phi \in \GA _n(R)$ 
is stably co-tame if and only if $I_{\phi }=R$. 
\end{cor}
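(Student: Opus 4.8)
The plan is to derive Corollary~\ref{cor:main} directly from the two named ingredients. Since $k$ is an infinite field, every $f\in\Rx$ automatically satisfies the degree condition (\ref{eq:degree condition}): the bound $\#k-2$ is infinite, so in particular every $\phi\in\GA_n(R)$ satisfies the degree condition, and hence $J_\phi=I_\phi$ as noted in the paragraph preceding Theorem~\ref{thm:main2}. So the two implications split cleanly along the two theorems.

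For the ``if'' direction, suppose $I_\phi=R$. Because $\phi$ satisfies the degree condition, Theorem~\ref{thm:main2}(ii) applies verbatim and gives that $\phi$ is stably co-tame. (Equivalently, one could invoke Theorem~\ref{thm:main2}(i) after observing $J_\phi=I_\phi=R$.) For the ``only if'' direction, suppose $\phi$ is stably co-tame. We must check the hypothesis ``$p$ is zero or a prime'' required by Theorem~\ref{thm:NG}: since $R$ is a $k$-algebra and $k$ is a field, $\ch R=\ch k$ is zero or a prime, so the hypothesis holds. Then Theorem~\ref{thm:NG} is logically equivalent to the statement that stably co-tame implies $I_\phi=R$, which is exactly what we want. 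Combining the two directions yields the equivalence.

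The only point requiring a word of care — and the nearest thing to an obstacle — is the translation of the infinitude of $k$ into ``the degree condition is vacuous.'' One should spell out that $\deg^s_{x_i}f$ is a nonnegative integer for any $f\in\Rx$, while $\#k-2=\infty$, so (\ref{eq:degree condition}) holds trivially; consequently the generating set defining $J_\phi$ is the same as the one defining $I_\phi$ (namely all $C_f$ with $f\in\sum_iR\phi(x_i)+\sum_iRx_i$), whence $J_\phi=I_\phi$. After that, the corollary is just a two-line bookkeeping combination of Theorems~\ref{thm:NG} and~\ref{thm:main2}, with no further computation. The proof can therefore be written in a few sentences, essentially as rendered above.
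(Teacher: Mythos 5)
Your proof is correct and follows exactly the paper's route: the paper derives the corollary by combining Theorem~\ref{thm:NG} (for the ``only if'' direction) with Theorem~\ref{thm:main2}(ii) (for the ``if'' direction), the latter applying because the degree condition is vacuous over an infinite field. Your additional remarks on why $\ch R$ is zero or a prime and why $J_\phi=I_\phi$ are correct elaborations of what the paper leaves implicit.
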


In particular, 
if $R=k$ is an infinite field, 
then 
$\phi \in \GA _n(k)$ is stably co-tame 
if and only if a good monomial appears in 
$\phi (x_i)$ for some $i$. 
When $p=0$, 
this is the same as $\phi $ is non-affine.

In the following three sections, 
we prove Theorems~\ref{thm:main1}, 
\ref{thm:NG} and \ref{thm:main2} (i).

\section{Proof of Theorem~\ref{thm:main1}}\label{sect:main1}
\setcounter{equation}{0}

Let $R$ be any commutative ring, 
and $\Gamma $ a subgroup of $\GA _{n+1}(R)$ 
containing $\Aff _{n+1}(R)$, where $n\ge 2$. 
First, we study properties of $\Gamma $. 
Define 
$$
\hat{\ep }(f):=(x_1,\ldots ,x_n,x_{n+1}+f)
\in \GA _{n+1}(R)\ \ {\rm for\ each}\ \ f\in \Rx . 
$$
We identify each permutation $\sigma \in S_{n+1}$ with 
$(x_{\sigma (1)},\ldots ,x_{\sigma (n+1)})\in \Aff _{n+1}(R)$, 
and write $\phi ^{\sigma }:=\sigma ^{-1}\circ \phi \circ \sigma $ 
for $\phi \in \GA _{n+1}(R)$. 
Then, 
we have the following:

\nd (A) 
If $\phi =(f_1,\ldots ,f_n)\in \GA _n(R)$ 
belongs to $\Gamma $, 
then 
$\hat{\ep }(f_i)=\phi \circ \hat{\ep }(x_i)\circ \phi ^{-1}$ 
belongs to $\Gamma $ for each $1\le i\le n$.

\nd (B) If $f,g\in \Rxh $ 
satisfy $\ep (f),\ep (gx_{n+1})\in \Gamma $, 
then we have $\hat{\ep }(f)=\ep (f)^{(1,n+1)}\in \Gamma $, 
and so 
$$
\ep (fg)=\hat{\ep }(f)\circ \ep (gx_{n+1})\circ 
\hat{\ep }(f)^{-1}\circ \ep (gx_{n+1})^{-1}\in \Gamma . 
$$
In particular, 
$\ep (f)\in \Gamma $ implies 
$\ep (af)\in \Gamma $ for each $f\in \Rxh $ and $a\in R$, 
since $\ep (ax_{n+1})$ belongs to $\Aff _{n+1}(R)$, 
and hence to $\Gamma $. 

\medskip 

When $\Q \subset R$ and $n\ge 3$, 
Derksen showed that $\T _n(R)$ is generated by 
$\Aff _n(R)$ and $\{ \ep (ax_2^2)\mid a\in R\} $, 
and $\T _n(R)=\langle \Aff _n(R),\ep (x_2^2)\rangle $ holds 
if $R$ is generated by $R^*$ as a $\Q $-vector space 
(cf.~\cite[Thm.\ 5.2.1]{Essen}). 
We remark that, 
since $n\ge 3$, 
the first statement and (B) imply that 
$\T _n(R)=\langle \Aff _n(R),\ep (x_2^2)\rangle $ 
whenever $\Q \subset R$.

\begin{lem}\label{lem:Gamma}
If $\Gamma $ contains $\hat{\ep }(x_1x_2)$, 
or if $n=2$ and $\Gamma $ contains $\hat{\ep }(x_1x_2^2)$, 
then $\Gamma $ contains $\T_{n}(R)$. 
\end{lem}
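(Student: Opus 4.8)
The plan is to show that, under either hypothesis, $\Gamma $ contains $\E _n(R)$ together with a permutation-symmetric family of elementary automorphisms that suffices to generate $\T _n(R)$. By the discussion preceding the lemma, it is enough to produce $\ep (ax_2^2)\in \Gamma $ for all $a\in R$ when $n\ge 3$ (and a suitable analogue when $n=2$), since conjugating by elements of $\Aff _n(R)\subset \Aff _{n+1}(R)\subset \Gamma $ and using property (B) then yields all of $\E _n(R)$, and hence all of $\T _n(R)$.

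First suppose $\Gamma \ni \hat{\ep }(x_1x_2)$. Conjugating $\hat{\ep }(x_1x_2)$ by affine automorphisms in $\GA _{n+1}(R)$ that fix $x_{n+1}$ and act linearly on $x_1,\dots ,x_n$, one gets $\hat{\ep }(q)$ for every $q$ lying in the $\Aff _n(R)$-orbit of $x_1x_2$; by taking $R$-linear combinations (using that $\hat{\ep }(q)\circ \hat{\ep }(q')=\hat{\ep }(q+q')$ and that $\hat{\ep }(aq)\in \Gamma $ via composition with the affine map $x_{n+1}\mapsto ax_{n+1}$) one obtains $\hat{\ep }(q)$ for every quadratic form $q$ in $x_1,\dots ,x_n$ with coefficients in $R$. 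Now pick two such maps whose commutator is a nontrivial elementary automorphism in the $x_i$'s: the standard trick is to note that $\hat{\ep }(x_1x_2)$ and $\hat{\ep }(x_{n+1}\cdot \ell )$ — the latter obtained, for a linear form $\ell \in \Rxh $, by the same conjugation/commutator device used in (B), applied with roles of $x_1$ and $x_{n+1}$ interchanged — have a commutator equal to $\ep $ of a quadratic polynomial in the $x_i$'s. Iterating and polarizing, we land on $\ep (x_2^2)$ (or, after an affine change, $\ep (ax_2^2)$ for arbitrary $a\in R$), which by the remark before the lemma generates $\T _n(R)$ together with $\Aff _n(R)$ when $n\ge 3$; for $n=2$ one instead checks directly that the quadratic elementary maps so produced already generate $\T _2(R)$.

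For the second hypothesis, $n=2$ and $\Gamma \ni \hat{\ep }(x_1x_2^2)$, the same strategy applies but starting from a cubic rather than a quadratic: conjugating by $\GL _2(R)$-type affine maps and taking linear combinations produces $\hat{\ep }(c)$ for $c$ in the $R$-span of the $\Aff _2(R)$-orbit of $x_1x_2^2$, and then a commutator of $\hat{\ep }(x_1x_2^2)$ with a linear $\hat{\ep }(x_{n+1}\ell )$ drops the degree and feeds back into the $n=2$ generation statement for $\T _2(R)$.

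The main obstacle is the last step: extracting, from commutators of the $\hat{\ep }$'s, an elementary automorphism that actually lies in $\GA _n(R)$ (i.e. does not involve $x_{n+1}$) and is rich enough — together with $\Aff _n(R)$ — to recover all of $\E _n(R)$. This is delicate because naive commutators tend to reintroduce $x_{n+1}$; the trick is to alternate between moving the ``active'' slot between $x_1$ and $x_{n+1}$ via the transposition $(1,n+1)$, exactly as in property (B), and to control which monomials survive in the commutator. I would organize this as a short sequence of explicit identities, doing the $n\ge 3$ case by reducing to $\ep (ax_2^2)$ and the $n=2$ case separately, since there the ambient $\Aff _3(R)$ is smaller and one must be more careful about which cubic/quadratic monomials one can realize.
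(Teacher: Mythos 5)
There is a genuine gap, and it sits exactly where you yourself flag ``the main obstacle.'' Your plan reduces everything to producing $\ep (ax_2^2)\in \Gamma $ and then invoking the Derksen-type generation statement from the discussion preceding the lemma. But that statement requires $\Q \subset R$, while the lemma is asserted for an arbitrary commutative ring; in positive characteristic the reduction is simply false (Maubach--Willems: $\T _3(\F _2)\ne \langle \Aff _3(\F _2),\ep (x_2^2)\rangle $). Likewise your $n=2$ fallback, that ``the quadratic elementary maps so produced already generate $\T _2(R)$,'' fails over a field: by the Jung--van der Kulk amalgamated product structure, $\langle \Aff _2(k),\{ \ep (q)\mid \deg q\le 2\} \rangle $ is a proper subgroup of $\T _2(k)$ (it does not contain $\ep (x_2^3)$). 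So the target of your construction is wrong: you must produce $\ep (f)$ for every $f\in \Rxh $, of arbitrary degree, not just quadratics, and no appeal to a pre-existing generation theorem is available.

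The missing idea is the degree-\emph{raising} mechanism, which your sketch actually runs backwards (``drops the degree and feeds back''). The point of the extra variable is that $\hat{\ep }(x_1x_2)$, conjugated by the permutation $(1,n+1)(2,i)$, becomes $\ep (x_ix_{n+1})$, which is exactly of the form $\ep (gx_{n+1})$ with $g=x_i\in \Rxh $ required by property (B); hence $\ep (f)\in \Gamma $ implies $\ep (x_if)\in \Gamma $ for $2\le i\le n$, and starting from the affine maps $\ep (a)$ one gets $\ep (m)$ for every monomial $m\in \Rxh $ by induction on degree, whence $\E _n(R)\subset \Gamma $ since $\ep (g+h)=\ep (g)\circ \ep (h)$. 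In the case $n=2$ with $\hat{\ep }(x_1x_2^2)$, the same conjugation gives $\ep (x_2^2x_3)$, so (B) raises the exponent of $x_2$ by two, and the affine maps $\ep (a)$ and $\ep (ax_2)$ seed the induction. Two smaller slips: $\hat{\ep }(aq)$ cannot be obtained by conjugating with $x_{n+1}\mapsto ax_{n+1}$ unless $a\in R^*$ (use (B) with $\ep (ax_{n+1})\in \Aff _{n+1}(R)$ instead), and $\hat{\ep }(x_{n+1}\ell )$ is not a well-formed instance of $\hat{\ep }$, whose argument must lie in $\Rx $.
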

\begin{proof}
We show that $\Gamma $ contains $\ep (f)$ 
for each $f\in \Rxh $. 
Since $\ep (g+h)=\ep (g)\circ \ep (h)$ 
holds for each $g,h\in \Rxh $, 
we may assume that $f$ is a monomial. 
In the former case, 
$\Gamma $ contains 
$\hat{\ep }(x_1x_2)^{(1,n+1)(2,i)}=\ep (x_ix_{n+1})$ 
for each $2\le i\le n$. 
By (B), 
it follows that 
$\ep (f)\in \Gamma $ implies $\ep (x_if)\in \Gamma $. 
Since 
$\Gamma $ contains $\ep (a)$ for each $a\in R$, 
the assertion follows by induction on $\deg f$. 
In the latter case, 
$\Gamma $ contains $\hat{\ep }(x_1x_2^2)^{(1,3)}=\ep (x_2^2x_3)$. 
Hence, 
$\ep (ax_2^l)\in \Gamma $ 
implies $\ep (ax_2^{l+2})\in \Gamma $ 
for each $a\in R$ and $l\ge 0$ similarly. 
Since $\Gamma $ contains $\ep (a)$ and $\ep (ax_2)$, 
it follows that $\Gamma $ contains 
$\ep (ax_2^l)$ for all $l\ge 0$. 
\end{proof}

The following two implications hold for the conditions listed in 
Theorem~\ref{thm:main1}.

\begin{lem}\label{lem:key1}
{\rm (b)} implies {\rm (a)}, 
and {\rm (d)} implies {\rm (c)}. 
\end{lem}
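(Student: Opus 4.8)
\textbf{Proof proposal for Lemma~\ref{lem:key1}.}

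The plan is to verify both implications by exhibiting, for each relevant pair $(i,j)$ or index $i$, a concrete affine change of coordinates $\eta \in \Aff_n(R)$ together with an $R$-linear combination of elements already known to lie in $M_\phi$ that produces the desired monomial. For the implication (b)$\Rightarrow$(a), suppose $M_\phi$ contains $x_i^2$ for some $i$, say $i=1$ after permuting coordinates (a permutation is affine, so $M_\phi$ is stable under it by \eqref{eq:M phi rmk}). Since $M_\phi$ contains $\eta(\phi(x_1))$ for every $\eta \in \Aff_n(R)$ and in particular every affine image of the expression whose square-term we are tracking, I would apply the substitution $x_1 \mapsto x_1 + x_2$: this sends $x_1^2$ to $x_1^2 + 2x_1x_2 + x_2^2$. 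Because $M_\phi$ contains $x_1^2$, $x_2^2$ (the latter also lies in $M_\phi$ as an affine image, using $x_1 \mapsto x_2$), and the constants and linear terms, subtracting gives $2x_1x_2 \in M_\phi$. As $2 \in R^*$ by hypothesis (b), we conclude $x_1x_2 \in M_\phi$, which is condition (a) with the pair $(1,2)$.

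For the implication (d)$\Rightarrow$(c), assume $n=p=2$, that $M_\phi$ contains $x_1^3$ (the case $x_2^3$ being symmetric), and that there is $\xi \in R^*$ with $\xi+1 \in R^*$. The idea is again to substitute $x_1 \mapsto x_1 + \lambda x_2$ for a suitable $\lambda \in R$ and expand: $(x_1+\lambda x_2)^3 = x_1^3 + 3\lambda x_1^2 x_2 + 3\lambda^2 x_1 x_2^2 + \lambda^3 x_2^3$. Since $p=2$, we have $3 = 1$ in $R$, so this is $x_1^3 + \lambda x_1^2 x_2 + \lambda^2 x_1 x_2^2 + \lambda^3 x_2^3$. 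Taking $\lambda = 1$ and $\lambda = \xi$ gives two elements of $M_\phi$; subtracting $x_1^3$ (in $M_\phi$) and $\lambda^3 x_2^3$ (also in $M_\phi$, as an affine image of $x_1^3$ under $x_1\mapsto x_2$, scaled by the ring element $\lambda^3$ — here one uses that $M_\phi$ is an $R$-module) yields $x_1^2 x_2 + \lambda x_1 x_2^2 \in M_\phi$ for $\lambda \in \{1,\xi\}$. Subtracting these two, $(\xi - 1)x_1 x_2^2 \in M_\phi$, and I would want $\xi - 1$, equivalently $\xi + 1$ since $p=2$, to be a unit — which is exactly the hypothesis. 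Hence $x_1 x_2^2 \in M_\phi$, giving condition (c).

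The main thing to be careful about is the bookkeeping of which expressions genuinely lie in $M_\phi$: the definition only puts $\eta(\phi(x_i))$ in $M_\phi$, not arbitrary affine images of arbitrary elements, so I must make sure that the cubic or quadratic term I am manipulating really is (a summand of) some $\phi(x_i)$ and that after the substitution I am still looking at $\eta'(\phi(x_i))$ for a genuine $\eta' \in \Aff_n(R)$. In practice, if $M_\phi$ contains a pure power $x_i^d$, this means the coefficient of $x_i^d$ in some $\phi(x_j)$ is relevant, and the standard trick is that $M_\phi$ being an $R$-module generated by all the $\eta(\phi(x_i))$ lets one isolate individual monomials by taking $R$-linear combinations of translates; I expect this isolation argument to be the only slightly delicate point, and it is handled exactly as in the reductions used to prove Theorem~\ref{thm:main1}(a),(c) themselves. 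Everything else is the elementary binomial expansion in characteristic $p$ recorded above.
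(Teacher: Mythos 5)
Your proof is correct and follows essentially the same route as the paper: both implications are obtained by applying affine substitutions to the given pure powers, expanding the binomials, and cancelling inside the $R$-module $M_\phi$ using (\ref{eq:M phi rmk}), with the unit hypotheses ($2\in R^*$, resp.\ $\xi,\xi+1\in R^*$) used to divide out the resulting coefficients. The only unstated step is the division by the unit $\lambda=\xi$ needed to pass from $\lambda x_1^2x_2+\lambda^2x_1x_2^2$ to $x_1^2x_2+\xi x_1x_2^2$, and your closing worry about affine images of arbitrary elements of $M_\phi$ is already settled by (\ref{eq:M phi rmk}), which you correctly invoke.
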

\begin{proof}
By (\ref{eq:M phi rmk}), 
(b) implies $(x_1+x_2)(x_1-x_2)=x_1^2-x_2^2\in M_{\phi }$. 
Since 2 is a unit of $R$, 
we have 
$(x_1+x_2,x_1-x_2,x_3,\ldots ,x_n)\in \Aff _n(R)$. 
Hence, 
$M_{\phi }$ contains $x_1x_2$ by (\ref{eq:M phi rmk}). 
Similarly, 
(d) implies 
$x_1^2x_2+x_1x_2^2=x_1^3+x_2^3+(x_1+x_2)^3\in M_{\phi }$. 
Hence, 
$M_{\phi }$ contains 
$((\xi x_1)^2x_2+\xi x_1x_2^2)+
\xi ^2(x_1^2x_2+x_1x_2^2)
=\xi (\xi +1)x_1x_2^2$. 
Since $\xi (\xi +1)$ is a unit of $R$, 
this implies that $M_{\phi }$ contains $x_1x_2^2$. 
\end{proof}

Now, 
we prove Theorem~\ref{thm:main1}. 
Thanks to Lemmas~\ref{lem:Gamma} and \ref{lem:key1} 
and (\ref{eq:M phi rmk}), 
it suffices to show that 
$\Gamma :=\langle \Aff _{n+1}(R),\phi \rangle $ 
contains $\hat{\ep }(f)$ for each $f\in M_{\phi }$. 
Write $\phi =(f_1,\ldots ,f_n)$ 
and 
$$
f=h+\sum _{i=1}^n\sum _{j=1}^ra_{i,j}\eta _j(f_i),\ {\rm where}\ 
h\in \sum _{i=1}^nRx_i+R,\ 
a_{i,j}\in R\ {\rm and}\ \eta _j\in \Aff _n(R). 
$$
Then, 
$\hat{\ep }(f)$ is the product of $\hat{\ep }(h)$ 
and $\hat{\ep }(\eta _j(a_{i,j}f_i))$ 
for $i=1,\ldots ,n$ and $j=1,\ldots ,r$. 
Since $\Gamma $ contains 
$\hat{\ep }(f_i)$ for each $i$ by (A), 
$\Gamma $ contains $\hat{\ep }(a_{i,j}f_i)$ 
for each $i$, $j$ by (B). 
Hence, 
$\Gamma $ contains 
$\eta _j\circ \hat{\ep }(a_{i,j}f_i)\circ \eta _j^{-1}
=\hat{\ep }(\eta _j(a_{i,j}f_i))$ 
for each $i$, $j$. 
Since $\hat{\ep }(h)$ is affine, 
it follows that $\Gamma $ contains $\hat{\ep }(f)$. 
This completes the proof of 
Theorem~\ref{thm:main1}.

\section{Proof of Theorem~\ref{thm:NG}}\label{sect:IP}
\setcounter{equation}{0}

Assume that $p$ is zero or a prime. 
We define $\ngg _n(R)$ to be the set of $\phi \in \GA _n(R)$ 
such that no good monomial appears in 
$\phi (x_1),\ldots ,\phi (x_n)$. 
If $p=0$, 
then we have $\ngg _n(R)=\Aff _n(R)$. 
In this case, 
the following theorem is obvious.

\begin{thm}\label{thm:ngg}
$\ngg _n(R)$ is a subgroup of $\GA _n(R)$, 
and no element of $\ngg _n(R)$ is stably co-tame. 
In fact, 
$\E _n(R)\not\subset \langle \Aff _{n+r}(R),
\ngg _{n}(R)\rangle $ 
holds for any $r\ge 1$. 
\end{thm}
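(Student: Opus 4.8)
The plan is to prove the two assertions of Theorem~\ref{thm:ngg} in order: first that $\ngg _n(R)$ is a subgroup, and then the stronger non-stably-co-tame claim, from which the weaker statement about $r=1$ follows immediately. The subgroup claim is the technical core, and I expect it to be the main obstacle. The key point is that the notion of ``good monomial'' should be invariant, in an appropriate sense, under composition of automorphisms lying in $\ngg _n(R)$. Concretely, I would first record what the complement of ``good'' looks like: a monomial $ax_1^{t_1}\cdots x_n^{t_n}$ is \emph{not} good precisely when, modulo $p$ (and modulo $4$ in the $n=p=2$ case), its exponent vector is ``almost affine'' --- either total degree $\le 1$, or it is a $p$-th power of something whose exponents are $0$ or $1$ with at most one index nonzero, etc. So for $\phi \in \ngg _n(R)$ the polynomials $\phi (x_i)$ are sums of monomials of this restricted shape. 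I would then analyze how such shapes behave under the two types of generators of $\GA _n(R)$ (or rather, show closure directly): if $\phi ,\psi \in \ngg _n(R)$, substituting the components of $\psi $ into those of $\phi $ can only produce new monomials that are again non-good, because raising a non-good ``almost affine'' monomial to a $p$-th power, or taking products/sums forced by the substitution, stays within the non-good shapes --- here the characteristic-$p$ Frobenius and the careful mod-$4$ bookkeeping in cases (IV)--(V) do the work. Closure under inverses then follows since $\ngg _n(R)$ is a submonoid of the group $\GA _n(R)$ containing $\id $, provided one checks it is closed under inverse separately or argues via a filtration/degree argument that a left inverse inside a submonoid of a group lies in the submonoid; alternatively one shows directly that $\phi ^{-1}$ has no good monomials by induction on degree using $\phi \circ \phi ^{-1}=\id $.

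Granting the subgroup claim, the second assertion is comparatively soft. Since $\Aff _{n+r}(R)\subset \ngg _{n+r}(R)$ trivially (affine maps have only linear and constant monomials, never good ones), it suffices to show $\ngg _{n+r}(R)$ is stable under conjugation and composition in a way that keeps us inside it, and that $\E _n(R)\not\subset \ngg _{n+r}(R)$. The first part is exactly the subgroup claim applied in $n+r$ variables. For the second, the generator $\ep (f)=(x_1+f,x_2,\ldots ,x_n)$ with $f\in \Rxh $ chosen so that $f$ itself is a good monomial --- e.g. $f=x_2^2$ when $p=0$ or $p\ge 3$, $f=x_2^2x_3$ or $f = x_1 x_2^2$ adapted to the small cases from Theorem~\ref{thm:main1}, or more uniformly any $f$ witnessing goodness in the relevant characteristic --- lies in $\T _n(R)\subset \T _{n+r}(R)$ but not in $\ngg _{n+r}(R)$, so the subgroup $\langle \Aff _{n+r}(R),\ngg _n(R)\rangle \subset \ngg _{n+r}(R)$ cannot contain all of $\E _n(R)$. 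This gives $\E _n(R)\not\subset \langle \Aff _{n+r}(R),\ngg _n(R)\rangle $ for every $r\ge 1$; specializing $r=1$ and unwinding the definition of stably co-tame yields that no element of $\ngg _n(R)$ is stably co-tame.

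The hard part will be the closure-under-composition verification, and within it the bookkeeping for the modular conditions (II)--(V). The cleanest way I would organize it is to introduce, for each monomial, a ``type'' recording its exponent vector reduced modulo $p$ (and a refinement mod $4$ when $n=p=2$), observe that ``good'' depends only on this type, and then check that the set of non-good types is closed under the operations induced on types by polynomial multiplication and by the Frobenius $p$-th power map --- the latter sends a type to the zero type, which is non-good, and the former, restricted to non-good types, lands in non-good types by a short case check. Composition $\phi \circ \psi $ then only generates monomials whose types are sums (mod $p$) of types already present in the $\phi (x_i)$ and $\psi (x_j)$, together with possible Frobenius twists, so non-goodness is preserved. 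I would present the type-closure lemma as the single combinatorial heart of the argument and relegate the per-case arithmetic to a compact table or enumeration.
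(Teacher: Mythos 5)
Your overall architecture does match the paper's: realize the non-good monomials as a monomial-spanned class, prove closure under composition, obtain inverses by the degree-filtration induction (this is exactly the paper's Lemma~\ref{lem:IP}), and then trap $\langle \Aff _{n+r}(R),\ngg _n(R)\rangle $ inside the corresponding class in $n+r$ variables while exhibiting an element of $\E _n(R)$ outside it. However, the combinatorial claim you place at the heart of the first step is false as stated. The set of non-good exponent types is \emph{not} closed under the addition induced by multiplying monomials: $x_1$ and $x_2$ are non-good but $x_1x_2$ is good (type (II)), and for $p\ge 3$ the product $x_1\cdot x_1=x_1^2$ is good (type (III)). So ``composition only generates monomials whose types are sums of types already present, hence non-goodness is preserved'' is a broken inference. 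What saves the argument is a structural fact your sketch does not isolate: a non-good monomial is a $p^e$-th power of a monomial times \emph{at most one} bare factor $x_j$ (with $e=1$ versus $e=2$ bookkeeping when $n=p=2$). Under substitution, all factors except at most one are then Frobenius twists (type $0$), and the single untwisted factor is itself a non-good monomial of some $\phi (x_j)$; only because at most one summand in the ``sum of types'' is nonzero does non-goodness survive. The paper encodes precisely this by writing the class as the module $V=R[\x ^{p^d}]+\sum _i\sum _uR[\x ^{p^e}]x_i^{p^u}$, an $R[\x ^{p^e}]$-module on the generators $x_i^{p^u}$.

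The second assertion is also not as soft as you claim: your argument fails exactly in the case $n=p=2$. There $\Rxh =R[x_2]$, and no monomial in the single variable $x_2$ is a good monomial of the $(n+r)$-variable theory once $n+r\ge 3$ (type (II) needs two exponents $\equiv 1\pmod 2$, (III) needs $p\ge 3$, and (IV)--(V) need ambient dimension $2$). Hence $\E _2(R)\subset \ngg _{2+r}(R)$ when $p=2$ --- the paper even records that $R[x_2]\subset V_n$, so $\T _2(R)\subset V_2^2$ --- and your proposed witnesses $x_2^2x_3$ and $x_1x_2^2$ are not elements of $R[x_2]$ at all. The containment ``generated subgroup $\subset \ngg _{n+r}(R)$'' is therefore too weak to conclude. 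The paper instead bounds the generated subgroup by the strictly smaller class $W_{n+r}^{n+r}$ (where an exponent $\equiv 1\pmod 2$ is only permitted when it is $\equiv 1$ and all others are $\equiv 0\pmod 4$), which is respected by $\Aff _{n+r}(R)$ and by the extensions of elements of $\ngg _2(R)=W_2^2\cap \GA _2(R)$, and then observes $\ep (x_2^{\,3})\notin W_{2+r}^{2+r}$. (A smaller slip of the same kind: for $p=2$ and $n\ge 3$ the witness must be $\ep (x_2x_3)$, since neither $x_2^2$ nor $x_2^2x_3$ is good in characteristic $2$.) Both gaps are repairable, but each requires the specific module structure the paper builds, not just the mod-$p$ (or mod-$4$) type calculus you describe.
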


To prove Theorem~\ref{thm:ngg}, 
we need a lemma, 
where $p$ need not be zero or a prime. 
Consider the standard grading 
$\Rx =\bigoplus _{l\geqslant 0}\Rx _l$. 
An $R$-submodule $V$ of $\Rx $ 
is said to be {\it graded} if $V=\bigoplus _{l\geqslant 0}(V\cap \Rx _l)$. 
If $V$ is generated by monomials, 
then $V$ is graded. 
Recall that each $(f_1,\ldots ,f_n)\in \Rx ^n$ is 
identified with the substitution map $\Rx \to \Rx $ 
defined by $x_i\mapsto f_i$ for $i=1,\ldots ,n$, 
and $\Rx ^n$ forms a monoid 
for the composition defined in (\ref{eq:composition}). 
Note that $V^n$ is closed under this operation 
if and only if $\phi (V)\subset V$ holds for each $\phi \in V^n$.

\begin{lem}\label{lem:IP}
Let $V$ be a graded $R$-submodule of $\Rx $ 
such that $V^n$ is closed under composition. 
If $x_1,\ldots ,x_n\in V$, 
then $V^n\cap \GA _n(R)$ is a subgroup of $\GA _n(R)$. 
\end{lem}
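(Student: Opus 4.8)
The plan is to show that $V^n\cap \GA_n(R)$ is closed under composition and under taking inverses; since it obviously contains the identity $(x_1,\ldots,x_n)$ by the hypothesis $x_1,\ldots,x_n\in V$, this suffices. Closure under composition is immediate from the two hypotheses: if $\phi,\psi\in V^n\cap\GA_n(R)$, then $\phi\circ\psi\in\GA_n(R)$ because $\GA_n(R)$ is a group, and $\phi\circ\psi\in V^n$ because $V^n$ is closed under composition. So the entire content of the lemma is that the inverse of an element of $V^n\cap\GA_n(R)$ again lies in $V^n$.

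The main obstacle is therefore the inverse. Here I would exploit the grading. Fix $\phi=(f_1,\ldots,f_n)\in V^n\cap\GA_n(R)$ and write $\phi^{-1}=(g_1,\ldots,g_n)$ with each $g_i\in\Rx$. Decompose each $g_i$ into its homogeneous components $g_i=\sum_{l\ge 0}g_{i,l}$ with $g_{i,l}\in\Rx_l$; I want to show $g_{i,l}\in V$ for every $l$, which by gradedness of $V$ gives $g_i\in V$. The strategy is induction on $l$. Consider the map $\psi_d:=(g_{1,\le d},\ldots,g_{n,\le d})$, the ``truncation'' of $\phi^{-1}$ keeping only homogeneous parts of degree $\le d$. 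One checks that $\psi_d\circ\phi$ agrees with the identity map $(x_1,\ldots,x_n)$ in all degrees $\le d$ (a standard ``formal inverse'' computation: the degree-$l$ part of $\phi^{-1}\circ\phi$ depends only on the degree-$\le l$ parts of $\phi^{-1}$, since $\phi$ has no constant term contributing a degree drop—strictly speaking one first reduces to the case where $\phi$ and $\phi^{-1}$ fix the origin by composing with an affine map in $V^n$, or one argues directly with the filtration). If I can show inductively that $\psi_d\in V^n$, then since $V^n$ is closed under composition, $\psi_d\circ\phi\in V^n$, and subtracting off the already-known lower-degree parts (which lie in $V^n$) isolates $g_{i,d+1}$, forcing it into $V$; this closes the induction.

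There is a subtlety to pin down: the truncation $\psi_d$ need not a priori be in $\GA_n(R)$, so I must be careful that ``$V^n$ closed under composition'' is exactly the hypothesis used (it is a statement about the monoid $\Rx^n$, not the group), and that is why the lemma is phrased with the monoid condition rather than a group condition. I expect the cleanest write-up to first normalize $\phi$ so that $\phi(0)=0$: if $a\in R^n$ is the constant term of $\phi$, then $\tau:=(x_1-a_1,\ldots,x_n-a_n)\in\Aff_n(R)\subset V^n$ (its coordinates are $R$-linear combinations of $1,x_1,\ldots,x_n$, all in $V$), and $\tau\circ\phi$ fixes the origin and still lies in $V^n\cap\GA_n(R)$; it suffices to prove the claim for such $\phi$, since $\phi^{-1}=(\tau\circ\phi)^{-1}\circ\tau$. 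After this reduction the degree-filtration bookkeeping in the induction above is entirely routine, and the hardest part is simply being precise about ``the degree-$l$ component of a composite depends only on bounded-degree data,'' which is a formal power-series manipulation rather than anything deep.
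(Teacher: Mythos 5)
Your overall strategy coincides with the paper's: reduce everything to showing $\phi^{-1}\in V^n$, normalize $\phi$ by an affine map lying in $V^n$, and then argue degree by degree using the gradedness of $V$ together with closure of $V^n$ under composition (the paper runs this as a minimal-counterexample argument rather than an induction, which is cosmetic). The gap is in your normalization: you only remove the constant term, so after replacing $\phi$ by $\tau\circ\phi$ its linear part is still an arbitrary invertible matrix. Now trace your inductive step. Writing $\phi^{-1}(x_i)=g_i=\sum_l g_{i,l}$ and assuming $g_{i,m}\in V$ for all $m\le d$, what you actually extract from $\psi_d\circ\phi\in V^n$, $\phi(g_i)=x_i\in V$ and gradedness is that the degree-$(d+1)$ homogeneous component of $\phi(g_{i,d+1})$ lies in $V$; but that component equals $L(g_{i,d+1})$, where $L$ is the substitution by the linear part of $\phi$ --- not $g_{i,d+1}$ itself. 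Since $V$ is only a graded $R$-submodule, $L(h)\in V$ does not formally yield $h\in V$, so the phrase ``isolates $g_{i,d+1}$, forcing it into $V$'' does not follow as written.

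The gap is repairable in two ways, and you should pick one explicitly. (1) Note that $L^{-1}$ is a linear map whose coordinates are $R$-linear combinations of $x_1,\dots,x_n\in V$, hence $L^{-1}\in V^n$; closure of $V^n$ under composition then gives $L^{-1}(V)\subset V$, and applying this to $L(g_{i,d+1})$ recovers $g_{i,d+1}\in V$. (2) Do what the paper does: compose with the inverse of the \emph{entire} affine part, choosing $\eta\in V^n\cap\Aff_n(R)$ so that $\psi:=\phi\circ\eta$ satisfies $\psi(x_i)\in x_i+\bigoplus_{l\ge 2}\Rx_l$; then the relevant homogeneous component of $\psi(g_{i,d+1})$ is exactly $g_{i,d+1}$ and the bookkeeping closes. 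A smaller inaccuracy: the affine map used lies in $V^n$ not because $\Aff_n(R)\subset V^n$ (false in general, since $1$ need not belong to $V$), but because its constant terms are the degree-$0$ components of the $\phi(x_j)\in V$ and hence lie in $V$ by gradedness; your parenthetical justification of $\tau\in V^n$ via ``$1,x_1,\dots,x_n$, all in $V$'' tacitly assumes $1\in V$, which is not among the hypotheses.
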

\begin{proof}
Since $V^n\cap \GA _n(R)$ contains $(x_1,\ldots ,x_n)$, 
and is closed under composition, 
we show that $\phi ^{-1}$ belongs to $V^n$ 
for each $\phi \in V^n\cap \GA _n(R)$. 
There exists $\eta \in V^n\cap \Aff _n(R)$ 
for which $\psi:=\phi \circ \eta $ satisfies 
$\psi(x_i)\in x_i+\bigoplus _{l\geqslant 2}\Rx _l$ 
for $i=1,\ldots ,n$. 
Since $\phi ^{-1}=\eta \circ \psi^{-1}$, 
and $V^n$ is closed under composition, 
it suffices to verify that 
$\psi^{-1}$ belongs to $V^n$. 
Suppose to the contrary that $f:=\psi^{-1}(x_i)$ 
does not belong to $V$ for some $i$. 
Write $f=\sum _{l\geqslant 0}f_l$, 
where $f_l\in \Rx _l$. 
Then, 
$f_d\not\in V$ holds for some $d\ge 0$. 
Take the minimal $d$, 
and set $f':=\sum _{l<d}f_l$ and $f'':=\sum _{l>d}f_l$. 
Let $h$, $h'$ and $h''$ be the homogeneous 
components of 
$\psi(f_d)$, $\psi(f')$ and $\psi(f'')$ of degree $d$, 
respectively. 
Then, 
$h+h'+h''$ belongs to $V$, 
since $\psi(f_d)+\psi(f')
+\psi(f'')=\psi(f)=x_i$ belongs to $V$, 
and $V$ is graded by assumption. 
By the minimality of $d$, 
we have $f'\in V$. 
Since $\psi=\phi \circ \eta $ belongs to $V^n$, 
it follows that 
$\psi(f')$ belongs to $V$. 
This implies that $h'$ belongs to $V$ as before. 
As for $h$ and $h''$, 
we have $h=f_d$ and $h''=0$, 
since $\psi(x_i)\in x_i+
\bigoplus _{l\geqslant 2}\Rx _l$ 
holds for $i=1,\ldots ,n$. 
Thus, 
$h+h'+h''=f_d+h'$ does not belong to $V$, 
a contradiction. 
Therefore, $\psi^{-1}$ belongs to $V^n$. 
\end{proof}

We remark that, 
if $V=\sum _{s\in \Sigma }As$ 
for some $\Sigma \subset \Rx $ 
and an $R$-subalgebra $A$ of $\Rx $, 
and if $\phi (A)\subset A$ 
and $\phi (\Sigma )\subset V$ 
hold for each $\phi \in V^n$, 
then $V^n$ is closed under composition. 
For example, 
assume that $R$ has prime characteristic $p$, 
and define $R[\x ^{p^l}]:=R[x_1^{p^l},\ldots ,x_n^{p^l}]$ 
for each $l\ge 0$. 
Set $\Z _{d}:=\{ a\in \Z \mid a\ge d\} $ 
for each $d\in \Z$. 
Let $d,e\in \Z _0$ and $\emptyset \ne N\subset \Z _0$ 
be such that $d\le e$, 
and each $u,v\in N$ satisfy $u+v\in N\cup \Z _d$. 
Then, 
we define a graded $R$-submodule of $\Rx $ by 
$$
V:=V(d,e,N):=R[\x ^{p^d}]
+\sum _{i=1}^n\sum _{u\in N}R[\x ^{p^e}]x_i^{p^u}. 
$$
Let us prove that 
$V^n$ is closed under composition 
using the remark for $A:=R[\x ^{p^e}]$ and 
$\Sigma :=R[\x ^{p^d}]\cup 
\{ x_i^{p^u}\mid 1\le i\le n,u\in N\} $. 
First, note that 
$f\in V$ implies $f^{p^v}\in V$ 
for any $v\in N$, 
since each $u,v\in N$ satisfy 
$(x_i^{p^u})^{p^v}\in \{ x_i^{p^w}\mid w\in N\} \cup R[x_i^{p^d}]$, 
and $R[\x ^{p^d}]$ contains $R[\x ^{p^e}]$. 
Hence, $\phi (x_i^{p^u})$ belongs to $V$ 
for each $\phi \in V^n$, $1\le i\le n$ and $u\in N$. 
Clearly, 
$\phi (R[\x ^{p^l}])\subset R[\x ^{p^l}]$ holds 
for any $\phi \in \Rx ^n$ and $l\ge 0$. 
Therefore, 
we have $\phi (A)\subset A$ 
and $\phi (\Sigma )\subset V$ 
for each $\phi \in V^n$.

Now, 
let us prove Theorem~\ref{thm:ngg} when $p$ is a prime. 
Clearly, $u,v\in \zs $ implies $u+v\in \zs \cup \Z _1$. 
So, we define $V_n:=V(1,1,\zs )$ 
and $W_n:=V(1,2,\zs )$, 
i.e., 
\begin{align*}
V_n&=R[\x ^p]+R[\x ^p]x_1+\cdots +R[\x ^p]x_n\\
W_n&=R[\x ^p]+R[\x ^{p^2}]x_1+\cdots +R[\x ^{p^2}]x_n. 
\end{align*}
If $n=p=2$, 
then $t_1,t_2\ge 0$ satisfy $x_1^{t_1}x_2^{t_2}\in W_2$ 
if and only if 
$t_1,t_2\equiv 0\pmod{2}$, or 
$t_i\equiv 1$, $t_j\equiv 0\pmod{4}$ 
for some $i,j\in \{ 1,2\} $. 
Hence, 
we have $x_1^{t_1}x_2^{t_2}\not\in W_2$ 
if and only if $x_1^{t_1}x_2^{t_2}$ is a good monomial. 
Similarly, 
when $(n,p)\ne (2,2)$, 
a nonzero monomial $m$ is good if and only if 
$m$ does not belong to $V_n$. 
Thus, 
$\ngg _n(R)$ is equal to $W_2^2\cap \GA _2(R)$ if $n=p=2$, 
and to $V_n^n\cap \GA _n(R)$ otherwise. 
Therefore, 
$\ngg _n(R)$ is a subgroup of $\GA _n(R)$ 
by Lemma~\ref{lem:IP}. 
Note that 
$\langle \Aff _{n+r}(R),\ngg _{n}(R)\rangle $ 
is contained in $W_{n+r}^{n+r}$ if $n=p=2$, 
and in $V_{n+r}^{n+r}$ otherwise. 
Since $\E _n(R)\sm V_{n+r}^{n+r}$ 
contains $\ep (x_2x_3)$ if $n\ge 3$, 
and $\ep (x_2^2)$ if $p\ge 3$, 
while $\E _n(R)\sm W_{n+r}^{n+r}$ 
contains $\ep (x_2^{p+1})$ for any $n,p\ge 2$, 
we get the last part of 
Theorem~\ref{thm:ngg}. 
This completes the proof.

If $I$ is a proper ideal of $R$, 
then each $\phi \in \GA _{n+r}(R)$ induces an element 
$\phi _I$ of $\GA_{n+r}(R/I)$. 
Since 
$\E _n(R)\ni \tau \mapsto \tau _I\in \E _n(R/I)$ 
is surjective, 
$(\sigma \circ \tau )_I=\sigma _I\circ \tau _I$ 
for each $\sigma ,\tau \in \GA _{n+r}(R)$, 
and $\tau \in \Aff _{n+r}(R)$ implies $\tau _I\in \Aff _{n+r}(R/I)$, 
we see that 
\begin{equation}\label{eq:reduction}
\E _n(R)\subset \langle \Aff _{n+r}(R),\phi \rangle 
\ \ {\rm implies}\ \ 
\E _n(R/I)\subset \langle \Aff _{n+r}(R/I),
\phi _I\rangle 
\end{equation}
for any $r\ge 0$ and $\phi \in \GA _n(R)$. 
Hence, 
Theorem~\ref{thm:ngg} implies the following corollary.

\begin{cor}\label{cor:reduction}
Let $I$ be a proper ideal of $R$ 
such that $\ch (R/I)$ is zero or a prime. 
If $\phi \in \GA _n(R)$ satisfies $\phi _I\in \ngg _n(R/I)$, 
then 
$\E _n(R)\not\subset \langle \Aff _{n+r}(R),\phi \rangle $ 
holds for any $r\ge 1$. 
Here, 
$\ch (R/I)$ is the characteristic of $R/I$. 
\end{cor}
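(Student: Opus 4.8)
The plan is to derive Corollary~\ref{cor:reduction} directly from Theorem~\ref{thm:ngg} together with the reduction implication~(\ref{eq:reduction}), arguing by contradiction. Suppose $\phi \in \GA _n(R)$ satisfies $\phi _I\in \ngg _n(R/I)$ but $\E _n(R)\subset \langle \Aff _{n+r}(R),\phi \rangle $ for some $r\ge 1$. Applying~(\ref{eq:reduction}) with this $r$, we obtain $\E _n(R/I)\subset \langle \Aff _{n+r}(R/I),\phi _I\rangle $. Since $\ch (R/I)$ is zero or a prime by hypothesis, Theorem~\ref{thm:ngg} applies to the ring $R/I$: it asserts that $\E _n(R/I)\not\subset \langle \Aff _{n+r}(R/I),\ngg _n(R/I)\rangle $ for every $r\ge 1$. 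But $\phi _I\in \ngg _n(R/I)$, so $\langle \Aff _{n+r}(R/I),\phi _I\rangle \subseteq \langle \Aff _{n+r}(R/I),\ngg _n(R/I)\rangle $, and hence $\E _n(R/I)\not\subset \langle \Aff _{n+r}(R/I),\phi _I\rangle $ — contradicting what we just deduced.

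The only subtlety I foresee is making sure the hypotheses of each cited result are actually met. For~(\ref{eq:reduction}) we need $I$ to be a proper ideal, which is given; note~(\ref{eq:reduction}) is stated for any $r\ge 0$, so in particular it holds for the $r\ge 1$ at hand. For Theorem~\ref{thm:ngg} we need $\ch (R/I)$ to be zero or a prime — this is exactly the standing assumption "$p$ is zero or a prime" under which Section~\ref{sect:IP} operates, and it is imposed on $R/I$ in the statement of the corollary. One should also confirm that $\ngg _n(R/I)$ is the group on which Theorem~\ref{thm:ngg}'s last assertion is quantified; it is, since the theorem's final sentence reads $\E _n(R)\not\subset \langle \Aff _{n+r}(R),\ngg _n(R)\rangle $ for any $r\ge 1$, applied verbatim to the coefficient ring $R/I$.

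There is essentially no hard part here: the corollary is a formal consequence of a theorem and an already-established implication, and the argument is a two-line contrapositive chase. If anything deserves care it is purely bookkeeping — verifying that the containment $\langle \Aff _{n+r}(R/I),\phi _I\rangle \subseteq \langle \Aff _{n+r}(R/I),\ngg _n(R/I)\rangle $ is legitimate, which is immediate because adjoining a single element of a subset $S$ generates a subgroup contained in the one generated by all of $S$. So the write-up will be short: state the assumption $\phi _I\in \ngg _n(R/I)$, invoke~(\ref{eq:reduction}) to transport a hypothetical inclusion $\E _n(R)\subset \langle \Aff _{n+r}(R),\phi \rangle $ down to $R/I$, and then quote Theorem~\ref{thm:ngg} to obtain the contradiction.
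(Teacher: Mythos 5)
Your proof is correct and follows exactly the route the paper intends: the corollary is deduced by combining the reduction implication~(\ref{eq:reduction}) with Theorem~\ref{thm:ngg} applied over $R/I$, and your hypothesis-checking (properness of $I$, the characteristic assumption on $R/I$, and the containment of generated subgroups) is exactly the bookkeeping needed. No issues.
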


Now, we can prove Theorem~\ref{thm:NG}. 
By assumption, 
$I:=I_{\phi }$ is a proper ideal of $R$. 
If $p$ is a prime, 
then $R$ contains $\F _p$, 
and so $\ch (R/I)=p$. 
Hence, 
$\phi _I$ belongs to $\ngg _n(R/I)$ by the definition of $I$. 
Thus, 
$\phi $ is not stably co-tame 
by Corollary~\ref{cor:reduction}. 
If $p=0$, 
then $\phi _I$ is affine. 
By (\ref{eq:reduction}), 
this implies that $\phi $ is not stably co-tame.

Finally, 
we remark that, if $p=2$, then $R[x_2]$ is contained in $V_n$. 
Hence, we have $\E _2(R)\subset V_2^2$, 
and so $\T _2(R)\subset V_2^2$. 
Thus, 
for any $\phi \in \T _2(R)$, 
no monomial $x_1^{t_1}x_2^{t_2}$ 
with $t_1$ and $t_2$ odd appears in $\phi (x_1)$ and $\phi (x_2)$.  
If $R$ is a domain, 
the same holds for any $\phi \in \GA _2(R)$, 
since $\T _2(K)=\GA _2(K)$ for any field $K$ 
(cf.~\cite{Jung}, \cite{Kulk}).

\begin{lem}\label{lem:nilp}
Assume that $p=2$, 
$\phi \in \GA _2(R)$, $1\le i\le 2$ 
and $t_1,t_2\ge 0$ are odd. 
If $a\in R$ is the coefficient of $x_1^{t_1}x_2^{t_2}$ 
in $\phi (x_i)$, 
then $a$ belongs to the nilradical of $R$. 
\end{lem}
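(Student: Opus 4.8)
The plan is to exploit the fact, noted just before the statement, that $\T_2(K)=\GA_2(K)$ for every field $K$ (by Jung--van der Kulk), together with the observation that for $p=2$ one has $\T_2(S)\subset V_2^2$ for \emph{every} commutative $\F_2$-algebra $S$, because $S[x_2]\subset V_2(S)$ forces $\E_2(S)\subset V_2^2$. So over any field $K$ of characteristic $2$, \emph{no} automorphism of $\GA_2(K)$ has a monomial $x_1^{t_1}x_2^{t_2}$ with $t_1,t_2$ both odd in either coordinate. The strategy is then a localization-at-a-prime argument: show that the coefficient $a$ dies in $R_{\mathfrak q}/\mathfrak qR_{\mathfrak q}$ for every prime ideal $\mathfrak q$ of $R$, and conclude $a\in\bigcap_{\mathfrak q}\mathfrak q=\mathrm{nil}(R)$.

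First I would fix a prime $\mathfrak q\subset R$ and set $K:=\Frac(R/\mathfrak q)$, the residue field at $\mathfrak q$, which has characteristic $2$ since $p=2$ means $\F_2\subset R$. The ring map $R\to K$ extends coefficientwise to $\GA_2(R)\to\GA_2(K)$, sending $\phi$ to some $\bar\phi\in\GA_2(K)=\T_2(K)$. By the paragraph preceding the lemma, the coefficient of $x_1^{t_1}x_2^{t_2}$ in $\bar\phi(x_i)$ is $0$ in $K$; but that coefficient is the image of $a$ under $R\to R/\mathfrak q\hookrightarrow K$, and $R/\mathfrak q\hookrightarrow K$ is injective, so $a\in\mathfrak q$. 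Since $\mathfrak q$ was an arbitrary prime, $a\in\mathrm{nil}(R)$, which is the assertion.

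The one point requiring a little care — and the step I expect to be the main obstacle — is verifying that the reduction map $\GA_2(R)\to\GA_2(R/\mathfrak q)$ (or to $\GA_2(K)$) really does send automorphisms to automorphisms, i.e.\ that $\bar\phi$ is genuinely invertible over $K$ rather than merely a self-map of $K[x_1,x_2]$. This follows because a ring homomorphism $R\to S$ induces a functorial group homomorphism $\GA_2(R)\to\GA_2(S)$ compatible with composition and sending $\id$ to $\id$, exactly as recorded for proper ideals in the discussion around (\ref{eq:reduction}) in Section~\ref{sect:IP}: if $\psi$ is a two-sided inverse of $\phi$ over $R$, then $\bar\psi$ is a two-sided inverse of $\bar\phi$ over $R/\mathfrak q$, and then base change along $R/\mathfrak q\hookrightarrow K$ preserves this. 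Once that functoriality is in hand, the argument is immediate; no computation with the explicit form of $\phi$ is needed beyond tracking which coefficient is being discussed.
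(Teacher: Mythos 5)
Your proposal is correct and follows essentially the same route as the paper: reduce modulo an arbitrary prime $\mathfrak q$, pass to the residue field, invoke $\GA_2(K)=\T_2(K)\subset V_2^2$ to kill the coefficient of $x_1^{t_1}x_2^{t_2}$, and conclude $a\in\bigcap_{\mathfrak q}\mathfrak q$. The only difference is that you spell out the base change to $\Frac(R/\mathfrak q)$ and the functoriality of $\GA_2(-)$, which the paper leaves implicit in the remark preceding the lemma.
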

\begin{proof}
Let $\mathfrak{p}$ be any prime ideal of $R$. 
Since $R/\mathfrak{p}$ is a domain of characteristic two, 
$x_1^{t_1}x_2^{t_2}$ does not appear in 
$\phi _{\mathfrak{p}}(x_i)$ as mentioned. 
Hence, 
$a$ belongs to $\mathfrak{p}$. 
\end{proof}

\section{Proof of Theorem~\ref{thm:main2} (i)}\label{sect:main2}
\setcounter{equation}{0}

Assume that $R$ is a commutative $k$-algebra. 
To prove Theorem~\ref{thm:main2} (i), 
we verify that one of (a), (b) and (c) 
of Theorem~\ref{thm:main1} holds when $J_{\phi }=R$. 
If $J_{\phi }=R$, 
a good monomial appears in a polynomial satisfying 
the degree condition. 
This implies that $\# k>2$, 
and so there always exists $\xi \in R^*$ 
satisfying $\xi +1\in R^*$.

Take any $f(x)\in R[x]$ and write 
$f(x)=\sum _{i=0}^du_ix^{ip^e}$ 
with $d,e\ge 0$ and $u_i\in R$, 
where we regard $p^e=1$ if $p=0$. 
If $k$ contains $d+1$ distinct elements 
$\xi _0,\ldots ,\xi _d$, 
then $u_ix^{ip^e}$ 
can be written as a $k$-linear combination of 
$f(\xi _0x),\ldots ,f(\xi _dx)$ 
for each $0\le i\le d$ by linear algebra. 
This remark is used to prove the following lemma.

\begin{lem}\label{lem:vdm2}
If $f\in \Rx $ satisfies the degree condition, 
each monomial appearing in $f$ 
is written as a $k$-linear combination of 
$f(\xi _1x_1,\ldots ,\xi _nx_n)$ 
for $\xi _1,\ldots ,\xi _n\in k^*$. 
\end{lem}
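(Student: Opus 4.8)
The plan is to prove Lemma~\ref{lem:vdm2} by iterating the one-variable remark quoted just before the statement, peeling off one variable at a time. First I would set up the induction on the number of variables $n$; the base case $n=1$ is exactly the quoted remark, once we observe that the degree condition $\deg_{x_1}^sf\le \#k-2$ means that after writing $f(x_1)=\sum_{i=0}^du_ix_1^{ip^e}$ with $u_d\ne 0$ we have $d\le \#k-2$, hence $k$ contains $d+1$ distinct elements $\xi_0,\ldots,\xi_d$ and each $u_ix_1^{ip^e}$ is a $k$-linear combination of the $f(\xi_jx_1)$. Each such $u_ix_1^{ip^e}$ is itself a single monomial, so we are done in that case.

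For the inductive step I would view $f\in \Rx = R[x_2,\ldots,x_n][x_1]$ as a polynomial in $x_1$ with coefficients in the ring $R':=R[x_2,\ldots,x_n]$, say $f=\sum_{i=0}^du_i(x_2,\ldots,x_n)x_1^{ip^e}$ with $d=\deg_{x_1}^sf\le \#k-2$. Applying the one-variable remark over $R'$, each term $u_ix_1^{ip^e}$ is a $k$-linear combination of the polynomials $f(\xi_jx_1,x_2,\ldots,x_n)$, $j=0,\ldots,d$. Now fix $i$ and apply the induction hypothesis to $u_i\in R[x_2,\ldots,x_n]$: since $u_i$ is (up to nonzero scalar) a sum of certain monomials appearing in $f$, it satisfies $\deg_{x_l}^su_i\le \deg_{x_l}^sf\le \#k-2$ for $l=2,\ldots,n$, so the induction hypothesis expresses each monomial of $u_i$ as a $k$-linear combination of $u_i(\xi_2x_2,\ldots,\xi_nx_n)$ with $\xi_2,\ldots,\xi_n\in k^*$. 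Multiplying through by $x_1^{ip^e}$ and tracking how the two substitutions combine, I would conclude that each monomial $x_1^{ip^e}\cdot(\text{monomial in }x_2,\ldots,x_n)$ appearing in $f$ — i.e. each monomial of $f$ — is a $k$-linear combination of evaluations $f(\xi_1x_1,\ldots,\xi_nx_n)$ with all $\xi_l\in k^*$.

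One point that needs care, and which I expect to be the main technical obstacle, is the interaction between the separable degree in positive characteristic and the substitutions $x_l\mapsto \xi_lx_l$: I must check that the separable degree is unchanged by such a scaling (it is, since scaling by a unit sends $R[x^p]$ into itself and does not introduce new $p$-th-power behaviour), and that the degree condition genuinely passes to the coefficients $u_i$ and survives the substitution, so that the induction hypothesis applies to $u_i(\xi_2x_2,\ldots,\xi_nx_n)$ as well. I also need the harmless observation that $\xi_0$ may be taken to be $0$ in the one-variable step even though the lemma wants $\xi_l\in k^*$ — but the term coming from $\xi_j=0$ only contributes to the constant term in $x_1$, which is handled by the $i=0$ case and the induction hypothesis in the remaining variables, so in the end every evaluation used has all scaling factors in $k^*$. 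Assembling these pieces gives the claimed expression, completing the induction.
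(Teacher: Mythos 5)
Your proof follows essentially the same route as the paper: induct on $n$, write $f=\sum_i u_ix_1^{ip^e}$, recover each $u_ix_1^{ip^e}$ from the values $f(\xi x_1,x_2,\ldots,x_n)$ by the Vandermonde remark, and then apply the induction hypothesis to the coefficients $u_i$ (which inherit the degree condition since $\deg_{x_j}^su_i\le\deg_{x_j}^sf$). The one place you wobble is the aside about allowing $\xi_0=0$: this is both unnecessary and, as sketched, not clearly sound (a term $f(0,x_2,\ldots,x_n)=u_0$ is not an evaluation of the admissible form, and ``handling it by the $i=0$ case'' risks circularity). The point of the degree condition $\deg_{x_1}^sf\le\#k-2$ is precisely that you need only $d+1\le\#k-1=\#k^*$ interpolation points, so all of $\xi_0,\ldots,\xi_d$ can be chosen distinct \emph{and nonzero} from the start; with that observation in place of your detour, the argument is correct and matches the paper's.
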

\begin{proof}
We prove the lemma by induction on $n$. 
The case $n=0$ is clear. 
Assume that $n\ge 1$, 
and write $f=\sum _{i=0}^df_ix_1^{ip^e}$, 
where $d:=\# k-2$, $f_i\in \Rxh $ and $e\ge 0$. 
Take any monomial $m$ appearing in $f$. 
Then, 
there exists $i$ such that 
$m$ appears in $f_ix_1^i$. 
Since $\# k=d+2$, 
we may find distinct $\alpha _0,\ldots ,\alpha _d\in k^*$. 
Then, 
$f_ix_1^i$ is written as a $k$-linear combination of 
$f(\alpha _lx_1,x_2,\ldots ,x_n)$ 
for $l=0,\ldots ,d$ as remarked. 
By the choice of $i$, 
we have $m=m'x_1^i$ for some monomial $m'$ 
appearing in $f_i$. 
Note that 
$\deg _{x_j}^sf_i\le \deg _{x_j}^sf\le d$ for all $j$. 
Hence, 
by induction assumption, 
$m'$ is written as 
a $k$-linear combination of 
$f_i(\beta _2x_2,\ldots ,\beta _nx_n)$ 
for $\beta _2,\ldots ,\beta _n\in k^*$. 
Thus, 
$m=m'x_1^i$ is written as a $k$-linear combination of 
$f_i(\beta _2x_2,\ldots ,\beta _nx_n)x_1^i$'s, 
and therefore that of 
$f(\alpha _lx_1,\beta _2x_2,\ldots ,\beta _nx_n)$ 
for $l=0,\ldots ,d$ and 
$\beta _2,\ldots ,\beta _n\in k^*$. 
\end{proof}

Next, 
we define $\lambda \in \Aff _n(R)$ by $\lambda (x_i)=x_i+1$ 
for $i=1,\ldots ,n$.

\begin{lem}\label{lem:good eta}
If $x_1^{t_1}\cdots x_n^{t_n}$ is a good monomial, 
there appears in $\lambda (x_1^{t_1}\cdots x_n^{t_n})$ 
the monomial $ux_ix_j$ with $u\in k^*$ 
in the cases {\rm (I)}, {\rm (II)} and {\rm (III)}, 
$x_ix_j^2$ in the case {\rm (IV)}, 
and $x_i^3$ in the case {\rm (V)}, 
where $i,j\in \{ 1,\ldots ,n\} $, 
and where $i\ne j$ if {\rm (II)} or {\rm (IV)}. 
\end{lem}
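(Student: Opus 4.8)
The plan is to expand $\lambda(x_1^{t_1}\cdots x_n^{t_n})=\prod_{l=1}^n(x_l+1)^{t_l}$ by the binomial theorem. For any exponent vector $(s_1,\ldots ,s_n)$ with $0\le s_l\le t_l$, the coefficient of $x_1^{s_1}\cdots x_n^{s_n}$ in $\lambda(x_1^{t_1}\cdots x_n^{t_n})$ is the image in $R$ of the integer $\prod_{l=1}^n\binom{t_l}{s_l}$. Since $R$ is a nonzero $k$-algebra, $k$ embeds in $R$ and $\ch k=p$, so this image of an integer already lies in $k$, and the monomial $x_1^{s_1}\cdots x_n^{s_n}$ appears in $\lambda(x_1^{t_1}\cdots x_n^{t_n})$ as soon as $\prod_{l=1}^n\binom{t_l}{s_l}$ is a nonzero integer when $p=0$, resp.\ is $\not\equiv 0\pmod p$ when $p$ is a prime. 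Hence it suffices, in each of the five cases, to produce one exponent vector of the shape prescribed in the statement (supported on at most two indices, of total degree $2$ in cases (I), (II), (III) and total degree $3$ in cases (IV), (V)) for which this product of binomial coefficients is nonzero in $k$.

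The resulting case analysis is elementary. In case (I) every $\binom{t_l}{s_l}$ with $s_l\le t_l$ is a positive integer, hence nonzero in the characteristic-zero field $k$; and since $t_1+\cdots +t_n\ge 2$, either some $t_i\ge 2$, in which case $s_i=2$ and $s_l=0$ for $l\ne i$ give the monomial $\binom{t_i}{2}x_i^2$, or at least two of the $t_l$ equal $1$, say $t_i=t_j=1$, in which case $s_i=s_j=1$ gives $t_it_j\,x_ix_j$. In case (II) the choice $s_i=s_j=1$ (with $i<j$ as furnished by the hypothesis) gives the coefficient $t_it_j\equiv 1\pmod p$ of $x_ix_j$. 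In case (III) the hypothesis $t_i\not\equiv 0,1\pmod p$ forces $t_i\ge 2$, and the choice $s_i=2$ gives $\binom{t_i}{2}=\tfrac{1}{2}t_i(t_i-1)$, which is $\not\equiv 0\pmod p$ precisely because $p\ge 3$ (so $2$ is invertible modulo $p$) while $t_i\not\equiv 0,1\pmod p$.

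For the characteristic-$2$ cases (IV) and (V) I would invoke Lucas' theorem on binomial coefficients modulo $2$. In case (IV) the congruences $t_i\equiv 1$, $t_j\equiv 2\pmod 4$ force $t_i\ge 1$, $t_j\ge 2$ (and $i\ne j$ automatically), and the choice $s_i=1$, $s_j=2$ gives the coefficient $\binom{t_i}{1}\binom{t_j}{2}$ of $x_ix_j^2$: here $\binom{t_i}{1}=t_i$ is odd as $t_i\equiv 1\pmod 4$, and $\binom{t_j}{2}$ is odd because the binary digits of $2=(10)_2$ lie among those of $t_j$ (whose last two binary digits are $(10)$). In case (V) the congruence $t_i\equiv 3\pmod 4$ forces $t_i\ge 3$, and the choice $s_i=3$ gives $\binom{t_i}{3}$, which is odd because $3=(11)_2$ and the last two binary digits of $t_i$ are both $1$. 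I expect no genuine obstacle here: once the binomial expansion of $\lambda$ is written out, the lemma reduces to this finite, routine verification, the only points meriting a word of justification being that in each case the chosen exponents do not exceed $t_i,t_j$ and the two modulo-$2$ binomial evaluations in cases (IV) and (V).
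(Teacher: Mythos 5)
Your proof is correct and follows essentially the same route as the paper: expand $\lambda(x_1^{t_1}\cdots x_n^{t_n})=\prod_{l}(x_l+1)^{t_l}$ and exhibit one exponent vector of the required shape whose product of binomial coefficients is nonzero in $k$. The paper writes out only case (IV), via the characteristic-two factorization $(x_i+1)(x_j^2+1)(x_i^4+1)^{t_i'}(x_j^4+1)^{t_j'}$ --- which encodes the same information as your appeal to Lucas' theorem --- and declares the other cases ``similar''; your complete case analysis is accurate.
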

\begin{proof}
In the case (IV), 
we have 
$\lambda (x_1^{t_1}x_2^{t_2})
=(x_i+1)(x_j^2+1)(x_i^4+1)^{t_i'}(x_j^4+1)^{t_j'}$ 
for some $i,j\in \{ 1,2\} $ with $i\ne j$, and $t_1',t_2'\ge 0$. 
Then, 
it is easy to see that the monomial $x_ix_j^2$ appears 
in $\lambda (x_1^{t_1}x_2^{t_2})$. 
Other cases are verified similarly. 
\end{proof}

Now, 
let us prove Theorem~\ref{thm:main2} (i). 
Since $J_{\phi }=R$ by assumption, 
we may find $a_1,\ldots ,a_r\in R$ 
and $g_1,\ldots ,g_r\in \sum _{i=1}^nR\phi (x_i)+\sum _{i=1}^nRx_i$ 
with $r\ge 1$ as follows:

\smallskip

\nd 
(1) For each $1\le l\le r$, 
a good monomial $m_l$ appears 
in $g_l$ with coefficient $a_l$. \\
(2) $\sum _{l=1}^ra_l=1$, 
and $g_1,\ldots ,g_r$ satisfy the degree condition.

\smallskip

Since $g_l$ satisfies the degree condition for each $l$, 
so does $m_l$, 
and hence so does $\lambda (m_l)$. 
Clearly, 
$g_l$ belongs to $M_{\phi }$. 
Hence, 
$m_l$ belongs to $M_{\phi }$ 
by Lemma~\ref{lem:vdm2} and (\ref{eq:M phi rmk}). 
Then, 
$\lambda (m_l)$ belongs to $M_{\phi }$, 
so each monomial appearing in $\lambda (m_l)$ 
belongs to $M_{\phi }$ 
similarly. 
By Lemma~\ref{lem:good eta}, 
there appears in $\lambda (m_l)$ 
the monomial $n_l:=u_la_lx_{i_l}x_{j_l}$ with 
$u_l\in k^*$ in the cases (I), (II) and (III), 
$n_l:=a_lx_{i_l}x_{j_l}^2$ in the case (IV), 
and $n_l:=a_lx_{i_l}^3$ in the case (V), 
where $i_l,j_l\in \{ 1,\ldots ,n\} $, 
and where $i_l\ne j_l$ if (II) or (IV). 
Since $n_l$ belongs to $M_{\phi }$, 
we have $a_lx_1x_2\in M_{\phi }$ in the case (II). 
The proof of Lemma~\ref{lem:key1} shows that 
$a_lx_1x_2^2\in M_{\phi }$ in the case (V) 
as well as in the case (IV).

If $n\ge 3$ and $p=2$, 
then $m_l$ must be of type (II) for all $l$. 
Hence, 
$M_{\phi }$ contains $\sum _{l=1}^ra_lx_1x_2=x_1x_2$. 
Therefore, 
(a) of Theorem~\ref{thm:main1} holds.

If $n=p=2$, 
then $m_l$ is of type (II) or (IV) or (V) for each $l$. 
In the case (II), 
$a_l$ is nilpotent by Lemma~\ref{lem:nilp}. 
Since $\sum _{l=1}^ra_l=1$, 
it follows that $u:=\sum 'a_l$ is a unit of $R$, 
where the sum $\sum '$ is taken over 
$l$ such that $m_l$ is of type (IV) or (V). 
Hence, 
$M_{\phi }$ contains $u^{-1}\sum 'a_lx_1x_2^2=x_1x_2^2$. 
Therefore, 
(c) of Theorem~\ref{thm:main1} holds.

If $p\ne 2$, 
then each $m_l$ is of type (I) or (II) or (III), 
so $M_{\phi }$ contains $u_la_lx_{i_l}x_{j_l}$. 
By (\ref{eq:M phi rmk}), 
$M_{\phi }$ contains $a_lx_1^2$ if $i_l=j_l$. 
If $i_l\ne j_l$, 
then $M_{\phi }$ contains $a_lx_1x_2$, 
and hence contains $a_lx_1(x_2+x_1)-a_lx_1x_2=a_lx_1^2$. 
Thus, 
$M_{\phi }$ contains 
$\sum _{l=1}^ra_lx_1^2=x_1^2$. 
Therefore, 
(b) of Theorem~\ref{thm:main1} holds. 
This completes the proof of Theorem~\ref{thm:main2} (i).

\section{Remarks}\label{sect:rmk}
\setcounter{equation}{0}

\nd{\bf 6.1.} 
When $R=k$, 
Edo-Lewis~\cite{EL} showed that 
$\theta _N:=(\beta \circ \pi )^{-N}\circ \pi 
\circ (\beta \circ \pi )^N$ 
is neither affine nor co-tame for each $N\ge 3$, 
where 
$$
\beta :=(x_1+x_2^2(x_2+x_3^2)^2,
x_2+x_3^2,x_3)\quad{\rm and}\quad 
\pi :=(x_2,x_1,x_3). 
$$ 
Here, 
we mention that 
the expression of $\theta _N$ above 
is slightly different from the original one, 
due to the difference 
in the definitions of composition 
(cf.~\S 2.2 of \cite{EL} and (\ref{eq:composition})). 
Since $R=k$ and $\theta _N$ is not affine, 
$\theta _N$ is stably co-tame if $p=0$ 
by the remark after Corollary~\ref{cor:main}. 
If $p=2$, 
then $\theta _N$ belongs to $\ngg _3(k)$, 
and hence is not stably co-tame by Theorem~\ref{thm:ngg}. 
When $p\ge 3$, 
we have the following theorem.

\begin{thm}\label{thm:EL}
If $p\ge 3$, $N\ge 1$ and $\# k\ge 4^{2N-1}+2$, 
then $\theta _N$ is stably co-tame. 
\end{thm}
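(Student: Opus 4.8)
The plan is to verify the hypothesis $I_{\theta_N}=R=k$ of Theorem~\ref{thm:main2}(ii), together with the degree condition, so that stably co-tameness follows immediately. Since $R=k$ is a field, $I_{\theta_N}=k$ means exactly that some $\theta_N(x_i)$ contains a good monomial; because $p\ge 3$, a monomial $ax_1^{t_1}x_2^{t_2}x_3^{t_3}$ is good as soon as it is nonconstant and nonlinear in the residues mod $p$ — more precisely, type (III) applies once some exponent is $\not\equiv 0,1\pmod p$, and type (II) once two exponents are $\equiv 1\pmod p$. So the first, and main, step is to show that $\theta_N(x_i)$ is \emph{not} affine for at least one $i$ and that the non-affine behaviour survives: concretely, I would exhibit one explicit monomial in one of the three components of $\theta_N$ whose exponent vector, reduced mod $p$, is neither zero nor a standard basis vector, hence good.

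First I would set $\gamma:=\beta\circ\pi$ and analyze $\gamma$ and $\gamma^{-1}$ directly from the formulas for $\beta$ and $\pi$: $\beta$ is a triangular (elementary-type) automorphism, so $\gamma^{\pm N}$ can be written out, and the degrees grow like powers of $4$ in each coordinate — this is where the hypothesis $\#k\ge 4^{2N-1}+2$ enters, guaranteeing the degree condition $\deg_{x_i}^s\theta_N(x_j)\le \#k-2$. Then $\theta_N=\gamma^{-N}\circ\pi\circ\gamma^N$, and I would track the top-degree (or some conveniently isolated) monomial through this composition. The cleanest approach is probably to localize on a single variable, say specialize to understand $\deg_{x_3}$ or look at the leading form: $\beta$ raises the $x_2$-degree and $x_1$-degree in a controlled quadratic way, so after $N$ iterations one gets genuine exponents that are large (on the order of $4^N$ or $2^{2N-1}$), and reducing such an exponent mod an odd prime $p$ will generically not land on $0$ or $1$ — but one must ensure this for the \emph{specific} surviving exponent, not generically. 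The degree bound $\#k\ge 4^{2N-1}+2$ suggests the relevant separable degrees after the full conjugation are at most about $4^{2N-1}$, consistent with two levels of $N$-fold iteration composed with a degree-doubling map.

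The hard part will be step two: controlling which monomials actually survive the conjugation $\gamma^{-N}\circ\pi\circ\gamma^N$ without cancellation, since $\theta_N$ is close to the identity in low degrees (indeed $\theta_N$ is not affine precisely because of higher-degree terms that do not cancel). Edo--Lewis already proved $\theta_N$ is non-affine, so I would lean on that: non-affineness means some $\theta_N(x_i)$ has a monomial of degree $\ge 2$; I then only need that \emph{among} all degree-$\ge 2$ monomials appearing, at least one is good. If every appearing nonlinear monomial were non-good, then (for $p\ge 3$, $n=3$) each such monomial $x_1^{t_1}x_2^{t_2}x_3^{t_3}$ would have all $t_i\in\{0,1\}\pmod p$ and at most one $t_i\equiv 1\pmod p$ — i.e. $\theta_N\in\ngg_3(k)$. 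So the real content is ruling out $\theta_N\in\ngg_3(k)$. By Theorem~\ref{thm:ngg}, $\ngg_3(k)=V_3^3\cap\GA_3(k)$ with $V_3=k[\x^p]+\sum_i k[\x^p]x_i$, which is closed under composition and contains $\pi$ and $\beta$ iff $\beta(x_1)\in V_3$; but $\beta(x_1)=x_1+x_2^2(x_2+x_3^2)^2$ contains the monomial $x_2^4$ only if... one checks the expansion contains a monomial such as $x_2^2\cdot x_2^2=x_2^4$ or cross terms like $x_2^3x_3^2$, $x_2^2x_3^4$, and for $p\ge 5$ the exponent $3\not\equiv 0,1$, while for $p=3$ the monomial $x_2^2x_3^4$ has $x_2$-exponent $2\not\equiv 0,1\pmod 3$ — so $\beta\notin\ngg_3(k)$ when $p\ge 3$.

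Thus the proof reduces to: (a) show $\beta\notin\ngg_3(k)$ for every odd prime $p$ (an explicit check on $\beta(x_1)$, handling $p=3$ separately since exponents $4\equiv 1$ and $2\not\equiv 0,1$ there); (b) promote this to $\theta_N\notin\ngg_3(k)$ — here I would use that $\ngg_3(k)$ is a \emph{subgroup} (Theorem~\ref{thm:ngg}) and argue that if $\theta_N$ and $\gamma^N$ and $\pi$ were all in $\ngg_3(k)$ then so would be $\gamma^N=\pi^{-1}\circ\gamma^{N}\theta_N^{-1}\gamma^{-N}\cdot(\ldots)$ — more carefully, from $\theta_N=\gamma^{-N}\circ\pi\circ\gamma^N$ one gets $\pi=\gamma^N\circ\theta_N\circ\gamma^{-N}$, so $\pi\in\langle\ngg_3(k),\gamma\rangle$; but $\pi$ is affine, so this gives no contradiction directly — instead I would argue contrapositively on $\gamma$: if $\theta_N\in\ngg_3(k)$ then, conjugating, $\pi$'s conjugate by $\gamma^{-N}$ lies in... the cleanest route is simply to compute the relevant good monomial in $\theta_N(x_i)$ explicitly by tracking the degree-$\le 4^{2N-1}$ truncation, invoke the degree condition from $\#k\ge 4^{2N-1}+2$, and conclude via Theorem~\ref{thm:main2}(ii); and (c) verify the degree condition, which is the bookkeeping justified by the numerical hypothesis. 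I expect (b)–(c) to be routine degree estimates once (a) is in hand, and (a) to be the one genuine case analysis (odd $p$ vs.\ $p=3$).
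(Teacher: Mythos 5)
Your high-level target is the same as the paper's: exhibit a good monomial in some $\theta_N(x_i)$ together with the degree condition and invoke Theorem~\ref{thm:main2}. But the proposal never actually carries out the one step that constitutes the proof, and the fallback route you sketch is logically broken. Knowing $\beta\notin\ngg_3(k)$ tells you nothing about $\theta_N=(\beta\circ\pi)^{-N}\circ\pi\circ(\beta\circ\pi)^N$: the subgroup property of $\ngg_3(k)$ only says that products of elements \emph{in} $\ngg_3(k)$ stay in it; a product of elements \emph{not} in $\ngg_3(k)$ can perfectly well land back inside (indeed $\pi=\gamma^N\circ\theta_N\circ\gamma^{-N}$ is affine, as you yourself note). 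So step (a) does not feed into step (b), and step (b) --- ruling out $\theta_N\in\ngg_3(k)$, or equivalently exhibiting a surviving good monomial --- is precisely the hard cancellation problem you flag and then defer with ``compute the relevant good monomial explicitly by tracking the truncation.'' That is the entire content of the theorem, not routine bookkeeping. Likewise, Edo--Lewis non-affineness is not enough even in principle: for $p\ge 3$ a nonlinear monomial such as $x_1^{p+1}$ or $x_1x_2^p$ is not good, so one needs control of a \emph{specific} exponent, which you do not obtain.

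The paper's mechanism for both missing pieces is a leading-form argument with respect to several weight vectors $\w$: writing $\theta_N\circ\sigma$ as an explicit word in $\beta^{\pm1}$ and $\pi$, one proves by induction on $N$ that the highest $\w$-homogeneous part of $(\theta_N\circ\sigma)(x_i)$ equals $\pm(h_2^{\w})^{4^{2N-i}}$ whenever $\degw h_2>\max\{(1/4)\degw h_1,\,2w_3\}$, where $h_2=x_2-x_1^2(x_1-x_3^2)^2$. Taking $\w=(1,0,0),(0,1,0),(0,0,1)$ yields the degree bounds $\deg_{x_i}\theta_N'(x_2)\le 4^{2N-1}\le\#k-2$ (the degree condition), and taking $\w=(2,0,1)$ isolates the leading form $\pm\bigl(x_1(x_1-x_3^2)\bigr)^{t}$ with $t=2\cdot4^{2N-2}$, whose expansion contains $\pm(x_1x_3^2)^t$ with unit coefficient --- a good monomial of type (III) since $t$ is a power of $2$ and $p\ge3$. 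This is how cancellation is controlled: the top $\w$-homogeneous part of a composition is computed from the top parts of the factors, so the monomial is guaranteed to survive. Without some such device your plan does not close.
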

\begin{proof}
Observe that $\theta _N$ is written as 
$\sigma _1\circ \tau _1\circ \cdots \circ 
\sigma _N\circ \tau _N\circ \sigma $, 
where 
$$
\sigma _i\in \{ \pi \circ \beta \circ \pi , 
\pi \circ \beta ^{-1}\circ \pi \} ,\quad 
\tau _i\in \{ \beta ,\beta ^{-1}\} \quad 
{\rm and}\quad 
\sigma \in \{ \id ,\pi \}
$$
with 
$\sigma _1=\pi \circ \beta ^{-1}\circ \pi 
=:(h_1,h_2,x_3)$. 
For each $\w =(w_1,w_2,w_3)\in \Z ^3$, 
we define the $\w $-weighted grading on $\Rx $ 
by $\degw x_i:=w_i$ for $i=1,2,3$. 
For $f\in \Rx \sm \zs $, 
we denote by $f^{\w }$ the highest $\w $-homogeneous part of $f$. 
Now, set $\theta _N':=\theta _N\circ \sigma $. 
We claim that, 
if $\degw h_2$ is greater than $(1/4)\degw h_1$ and $2w_3$, 
then 
$$
\theta _N'(x_i)^{\w }
=(\sigma _1\circ \tau _1\circ \cdots \circ \sigma _N\circ \tau _N)(x_i)^{\w }
=\pm (h_2^{\w })^{4^{2N-i}}{\rm \ for\ }i=1,2. 
$$
In fact, 
the case $N=1$ is checked directly, 
and the case $N\ge 2$ follows by induction on $N$, 
since 
$(\sigma _i\circ \tau _i)(x_2)=x_2\pm x_1^2(x_1\pm x_3^2)^2\pm x_3^2$ 
and 
$$
(\sigma _i\circ \tau _i)(x_1)=
x_1\pm x_3^2\pm (x_2\pm x_1^2(x_1\pm x_3^2)^2)^2
(x_2\pm x_1^2(x_1\pm x_3^2)^2\pm x_3^2)^2
\quad{\rm for\ each\ }i. 
$$
Since 
$h_1=x_1-x_3^2$ and $h_2=x_2-x_1^2h_1^2$, 
the assumption of the claim holds for 
$\w =(1,0,0),(0,1,0),(0,0,1),(2,0,1)$, 
for which 
$$
h_2^{\w }=-x_1^4,x_2,-x_1^2x_3^4,-x_1^2(x_1-x_3^2)^2, 
$$ 
respectively. 
The first three of these imply 
$\deg _{x_i}\theta _N'(x_2)\le 4\cdot 4^{2N-2}\le \# k-2$ 
for $i=1,2,3$. 
Hence, 
$\theta _N'(x_2)$ satisfies the degree condition. 
Since $\theta _N'(x_2)^{(2,0,1)}=\pm 
(x_1(x_1-x_3^2))^t$ with $t:=2\cdot 4^{2N-2}$ 
similarly, 
the monomial 
$\pm (x_1x_3^2)^t$ appears in $\theta _N'(x_2)$. 
Since $p\ge 3$, 
we see that $x_1^tx_3^{2t}$ is a good monomial of type (III). 
Thus, we get $J_{\theta _N}=J_{\theta _N'}=R$. 
Therefore, 
$\theta _N$ is stably co-tame by 
Theorem~\ref{thm:main2} (i). 
\end{proof}

\nd{\bf 6.2.} 
When $k$ is a finite field, 
Theorem~\ref{thm:main2} might not be useful due to 
the degree condition. 
In such a case, 
the following technique may be effective. 
Take any $\eta \in \Aff _n(R)$, 
and set $\delta :=\eta -\id $. 
Then, 
$\delta (M_{\phi })\subset M_{\phi }$ 
holds for each $\phi \in \GA _n(R)$ 
by (\ref{eq:M phi rmk}). 
Since $\phi (x_j)$ belongs to $M_{\phi }$ 
for each $j$, 
we get the following lemma.

\begin{lem}\label{lem:delta}
If $\phi \in \GA _n(R)$ and $h\in \Rx $ 
satisfy the following condition, 
then $h$ belongs to $M_{\phi }$$:$ 
{\rm There exist $1\le j\le n$  and 
$\eta _1,\ldots ,\eta _l\in \Aff _n(R)$ with $l\ge 1$ 
such that 
$(\delta _1\circ \cdots \circ \delta _l\circ \phi )(x_j)$ 
belongs to $R^*h+M_{\phi }$, 
where $\delta _i:=\eta _i-\id $ for 
each $i$.} 
\end{lem}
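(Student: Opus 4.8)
The plan is to unwind the definitions and reduce everything to the containment property~(\ref{eq:M phi rmk}) of $M_\phi$. First I would recall that $M_\phi$ is by construction an $R$-module that contains $1$, $x_1,\dots,x_n$, and $\eta(\phi(x_i))$ for all $\eta\in\Aff_n(R)$ and all $i$; in particular $\phi(x_j)\in M_\phi$ for each $j$. The key structural observation, to be made at the outset, is that for any $\eta\in\Aff_n(R)$ the additive self-map $\delta:=\eta-\id$ of $\Rx$ preserves $M_\phi$: indeed for $f\in M_\phi$ we have $\delta(f)=\eta(f)-f$, and both $\eta(f)$ and $f$ lie in $M_\phi$ by~(\ref{eq:M phi rmk}) and the fact that $M_\phi$ is an $R$-module. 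Here I must be slightly careful, since $\delta$ is an additive (not $R$-linear or multiplicative) operator, but this is exactly why writing $\delta(f)=\eta(f)-f$ as a difference of two elements of the $R$-module $M_\phi$ does the job; so $\delta(M_\phi)\subseteq M_\phi$, and hence any composite $\delta_1\circ\cdots\circ\delta_l$ of such operators also preserves $M_\phi$.

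Given this, the proof of the lemma is essentially immediate. Under the stated hypothesis there are $1\le j\le n$ and $\eta_1,\dots,\eta_l\in\Aff_n(R)$, with $\delta_i:=\eta_i-\id$, such that $(\delta_1\circ\cdots\circ\delta_l\circ\phi)(x_j)\in R^*h+M_\phi$. Now $\phi(x_j)\in M_\phi$, so applying $\delta_l$, then $\delta_{l-1}$, and so on, and using $\delta_i(M_\phi)\subseteq M_\phi$ at each step, we get $(\delta_1\circ\cdots\circ\delta_l\circ\phi)(x_j)\in M_\phi$. Combining this with the hypothesis, there is a unit $c\in R^*$ and an element $m\in M_\phi$ with $ch+m\in M_\phi$, whence $ch\in M_\phi$, and therefore $h=c^{-1}(ch)\in M_\phi$ since $M_\phi$ is an $R$-module and $c^{-1}\in R$. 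This is the whole argument.

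The only point that needs genuine care—and thus the step I would flag as the main (albeit minor) obstacle—is justifying that the non-$R$-linear operators $\delta_i$ respect the module $M_\phi$; everything else is formal manipulation. Concretely, I would state and prove the one-line fact $\delta(M_\phi)\subseteq M_\phi$ for $\delta=\eta-\id$, $\eta\in\Aff_n(R)$ (which the excerpt already asserts just before the lemma), iterate it, and then chase $\phi(x_j)$ through the chain $\delta_l,\dots,\delta_1$. I expect no computational difficulties and no need to examine the detailed form of $\eta_i$ or of $h$; the lemma is purely a bookkeeping device packaging~(\ref{eq:M phi rmk}) for use in concrete examples such as the one in~\S\ref{sect:rmk}.
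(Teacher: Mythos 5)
Your proposal is correct and follows exactly the paper's argument: the paper likewise observes that $\delta(M_\phi)\subset M_\phi$ for $\delta=\eta-\id$ via~(\ref{eq:M phi rmk}), notes $\phi(x_j)\in M_\phi$, iterates, and concludes using that $M_\phi$ is an $R$-module and the coefficient of $h$ is a unit. (Minor aside: $\delta$ is in fact $R$-linear since $\eta$ is an $R$-algebra map, but your weaker additivity argument suffices.)
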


Assume that $p$ is a prime. 
For $i=1,\ldots ,n$, 
take any $a_i\in R^*$, 
define $\eta _i\in \Aff _n(R)$ 
by $\eta _i(x_i)=x_i+a_i$ 
and $\eta _i(x_j)=x_j$ for $j\ne i$, 
and put $\delta _i:=\eta _i-\id $. 
Then, 
we have $\delta _i^p=0$ since $\eta _i^p=\id $, 
$\deg _{x_i}\delta _i(x_i^d)<d$ for $d\ge 0$, 
and $\delta _i^t(x_i^d)\in R^*x_i^{d-t}
+\sum _{j=0}^{d-t-1}Rx_i^j$ 
if $p\nmid d-l$ for all $0\le l<t$. 
Note that $\ker \delta _i$ is an $R$-subalgebra of $\Rx $, 
and $\delta _i$ 
is $\ker \delta _i$-linear. 
Also, 
$\ker \delta _i$ contains the $R$-subalgebra $A_i$ 
generated by $q_i:=x_i^p-a_i^{p-1}x_i$ and $x_j$ for $j\ne i$. 
We claim that $\ker \delta _i=A_i$. 
In fact, 
each $f\in \ker \delta _i\sm \zs $ 
satisfies $p\mid \deg _{x_i}f=:lp$,  
hence $\deg _{x_i}(f-hq_i^l)<lp$ holds 
for some $h\in R[\{ x_j\mid j\ne i\} ]$. 
Thus, 
we get $f-hq_i^l\in A_i$ by induction on $l$, 
and so $f\in A_i$.

Now, 
fix $l=(l_1,\ldots ,l_n)\in \{ 0,\ldots ,p-1\} ^n$, 
and set 
$\delta ^l:=\delta _1^{l_1}\circ \cdots \circ \delta _n^{l_n}$ 
and 
$$
M_l:=\sum _{i=0}^nRx_i\x ^l+\sum _{i=1}^n\sum _{j=0}^{l_i-1}A_ix_i^j, 
\ {\rm where}\ 
x_0:=1 \ {\rm and} \ \x ^l:=x_1^{l_1}\cdots x_n^{l_n}. 
$$
Then, 
since $\delta _i$'s commute with each other, 
and are $A_i$-linear, 
we see that $\delta ^l(M_l)$ 
is contained in $\sum _{i=0}^nRx_i$. 
Hence, 
$\delta ^l(M_l)\subset M_{\phi }$ 
holds for every $\phi \in \GA _n(R)$. 
Using this, 
we can give sufficient conditions 
for stably co-tameness of $\phi $. 
For example, assume that 
$l_1,l_2\le p-2$. 
Then, 
$\delta ^l(x_1x_2\x ^l)$ 
belongs to $R^*x_1x_2+\sum _{i=0}^2Rx_i$. 
Hence, 
if $\phi $ satisfies $\phi (x_j)\in R^*x_1x_2\x ^l+M_l$ 
for some $1\le j\le n$, 
then 
$(\delta ^l\circ \phi )(x_j)$ 
belongs to $R^*x_1x_2+M_{\phi }$. 
This implies that $x_1x_2$ belongs to $M_{\phi }$ 
by Lemma~\ref{lem:delta}. 
Therefore, 
$\phi $ is stably co-tame by Theorem~\ref{thm:main1}. 
Similarly, 
if $p\ge 3$, $l_1\le p-3$, 
and $\phi (x_j)$ belongs to $R^*x_1^2\x ^l+M_l$ 
for some $1\le j\le n$, 
then $\phi $ is stably co-tame.

\smallskip

\nd{\bf 6.3.} 
When $n\ge 3$, 
Motoki Kuroda~\cite{MK} 
showed that $\ep (x_2^2)$ is stably co-tame for $R=\F _p$ with $p\ge 3$, 
and found it hard to decide whether 
$\ep (x_2^5)$ is stably co-tame for $R=\F _3$. 
Note that $\ep (x_2^5)$ is stably co-tame for $R=\F _9$ 
by Theorem~\ref{thm:main2} (ii).

\smallskip

\nd{\bf 6.4.} 
The {\it Tame Generators Problem} asks if $\GA _n(R)=\T _n(R)$.  
In the context of stably co-tame automorphisms, 
$\ngg _n(R)$ is a natural generalization of $\Aff _n(R)$. 
So, 
the following problem is of interest, 
as a generalization of the Tame Generators Problem 
in prime characteristic 
(see also~\cite{EK} for other generalizations).

\smallskip 

\nd {\bf Problem.}\! 
When $p$ is a prime, 
does it hold that $\GA _n(R)=\langle \ngg _n(R),\T _n(R)\rangle $?

\noindent
Department of Mathematics and Information Sciences\\ 
Tokyo Metropolitan University \\
1-1  Minami-Osawa, Hachioji \\
Tokyo 192-0397, Japan\\
kuroda@tmu.ac.jp

\end{document}